\documentclass{article}
\usepackage[english]{babel}
\usepackage{amsmath}
\usepackage{amsfonts}
\usepackage{amsthm}
\usepackage{xcolor}
\usepackage{cancel}

\usepackage{mathabx}

\addtolength{\textwidth}{5cm}
\addtolength{\oddsidemargin}{-2.5cm}
\addtolength{\evensidemargin}{-4.cm}
\addtolength{\textheight}{4.8cm}
\addtolength{\topmargin}{-2.6cm}

\newcommand{\FF}{{\mathrm{ff}}}
\newcommand{\PM}{{\mathrm{pm}}}
\newcommand{\BJ}{{\mathrm{BJ}}}
\renewcommand{\vec}[1]{{\ensuremath{\boldsymbol{\mathrm #1}}}}
\newcommand{\ten}[1]{\ensuremath{\boldsymbol{\mathsf{#1}}}}

\newcommand{\vdot}{\boldsymbol{\mathsf{\ensuremath\cdot}}}
\newcommand{\del}{\ensuremath{\nabla}}
\newcommand{\deld}{\ensuremath{\del\vdot}}
\newcommand{\lrp}[1]{\left( #1 \right)}

\newcommand{\RR}{\mathbb{R}}

\usepackage{stmaryrd}
\usepackage{mathtools}
\DeclarePairedDelimiter{\norm}{\lVert}{\rVert}
\newcommand{\comment}[1]{}

\usepackage{hyperref}
\usepackage{cleveref}

\usepackage{hhline} 
\usepackage{multirow} 

\def\xHone{{\rm H}^{1}}
\def\xLn#1{{\rm L}^#1}
\def\xHonehalf{{\rm H}^{1/2}}
\def\Hzz{{\rm H}^{1/2}_{00}}
\newcommand{\xpdrv}[2]{{\frac{\partial #1}{\partial #2}}}
\newtheorem{thrm}{Theorem}

\title{Analysis of the Stokes--Darcy problem with generalised interface  conditions}
\author{Elissa         
    Eggenweiler\footnote{University of Stuttgart, Institute of Applied Analysis and Numerical Simulation, 
    emails: elissa.eggenweiler@ians.uni-stuttgart.de, rybak@ians.uni-stuttgart.de}, 
    Marco Discacciati\footnote{Loughborough University, Department of Mathematical Sciences, email: m.discacciati@lboro.ac.uk} and
	Iryna Rybak\footnotemark[1] 
	\\
	 \\
	}
\vspace{-100ex}
\date{ }
\begin{document}

\maketitle

\begin{abstract}
Fluid flows in coupled systems consisting of a free-flow region and the adjacent porous medium appear in a variety of environmental settings and industrial applications. In many applications, fluid flow is non-parallel to the fluid--porous interface that  requires a generalisation of the Beavers--Joseph coupling condition typically used for the Stokes--Darcy problem. Generalised coupling conditions valid for arbitrary flow directions to the interface are recently derived using the theory of homogenisation and boundary layers. The aim of this work is the mathematical analysis of the Stokes--Darcy problem with these  generalised interface conditions. We prove the existence and uniqueness of the weak solution of the coupled problem. The well-posedness is guaranteed under a suitable relationship between the permeability and the boundary layer constants containing geometrical information about the porous medium and the interface.
We numerically study the validity of the obtained results for realistic problems and provide a benchmark for numerical solution of the Stokes--Darcy problem with generalised interface conditions. 
\end{abstract}
\vspace{3ex}
\textbf{Keywords:}\\
Stokes equations, Darcy's law, interface conditions, well-posedness.
\\[2ex]
\textbf{Mathematics Subject Classification:}\\
35Q35, 
65N08, 
76D03, 
76D07, 
76S05. %
\\[2ex]

\section*{Introduction}

Multi-domain flow systems containing a free-flow region and a porous medium with a common interface appear in a wide range of environmental settings and technical applications, e.g., soil-atmospheric interactions, industrial filtration and drying processes, water-gas management in fuel cells~\cite{Beaude_etal_19, Hanspal_etal_09, Jarauta_etal_20}. Mathematical models for such coupled flow systems convey the conservation of mass, momentum and energy, both in the two flow domains and across the fluid--porous interface. In the most general case, the Navier--Stokes equations are applied to describe fluid flow in the free-flow domain and multi-phase Darcy's law is used in the porous medium~\cite{Discacciati_Quarteroni_09, Mosthaf_Baber_etal_11}. However, depending on the application of interest and the flow regime, various simplifications of this general system are possible~\cite{Dawson_08, Maxwell2014, Reuter_etal_19, Rybak_etal_15, Sochala_Ern_Piperno_09}. 

The most widely studied free-flow and porous-medium flow system is described by the coupled Stokes--Darcy equations with different sets of interface conditions~\cite{Angot_etal_17, Discacciati_Miglio_Quarteroni_02, Goyeau_Lhuillier_etal_03, Jaeger_Mikelic_09, Bars_Worster_06, OchoaTapia_Whitaker_95}. Most of these coupling concepts are based on the Beavers--Joseph condition on the tangential velocity or its simplification by Saffman~\cite{Beavers_Joseph_67,  Discacciati_Quarteroni_09, Jaeger_Mikelic_09, Bars_Worster_06, Nield_09, Saffman_71}. 
However, these conditions are  developed for flows which are parallel to the fluid--porous interface, and therefore not applicable to arbitrary flow directions at the fluid--porous interface, e.g., for industrial filtration problems~\cite{Eggenweiler_Rybak_20}. In spite of the fact that they provide inaccurate results for arbitrary flow directions to the porous layer (Fig.~\ref{fig:advantages}),
they are still routinely used in the literature.

Alternative coupling conditions existing in the literature are either theoretically derived and involving unknown model parameters, which need to be calibrated before they can be used in computational models~\cite{Angot_18,Angot_etal_17}, or they are not justified for arbitrary flow directions at the fluid--porous interface~\cite{Lacis_Bagheri_17, Lacis_etal_20, Zampogna_Bottaro_16}. These limitations of the existing interface conditions severely restrict the variety of applications that can be accurately modelled. Recently, generalised interface conditions have been proposed in~\cite{Eggenweiler-MMS}. These conditions recover the classical conservation of mass and the balance of normal forces for isotropic porous media and provide an extension of the Beavers--Joseph condition. They reduce to those developed in~\cite{Jaeger_Mikelic_00, Jaeger_Mikelic_09} for parallel flows to the porous layer under the same assumptions on the flow direction, but they are valid for arbitrary flow directions to the porous layer (Fig.~\ref{fig:advantages}) and do not contain any unknown parameters. All the effective coefficients appearing in these generalised conditions are computed numerically based on the pore-scale geometrical information of the coupled flow system. The goal of this paper is to prove the well-posedness of the coupled Stokes--Darcy problem with this new set of interface conditions.  

The coupled Stokes--Darcy problem has been extensively studied in the last decade using the Beavers--Joseph--Saffman interface condition on the tangential component of the free-flow velocity~\cite{Discacciati_Quarteroni_09, GiraultRiviere,Kanschat_Riviere_10,Layton_Schieweck_Yotov_03}. Considering the original Beavers--Joseph condition, where the tangential component of the porous-medium velocity is not neglected, makes proving the well-posedness of the Stokes-Darcy problems quite challenging as it can be seen in~\cite{Cao_10,Hou_Qin_19}. This becomes even more difficult when the generalised conditions of \cite{Eggenweiler-MMS} are adopted. 


The paper is organised as follows. In Section~\ref{sec:models}, we provide the formulation of the coupled Stokes--Darcy problem with the generalised interface conditions and show the advantage of these conditions over the classical ones based on the Beavers--Joseph condition. In Section~\ref{sec:weak-form}, we derive the weak formulation of the coupled Stokes--Darcy problem with the generalised interface conditions and prove the existence and uniqueness of the weak solution. The well-posedness is guaranteed  
for isotropic porous media under a suitable relationship between the permeability and the boundary layer constants which contain the geometrical information about the interface. In Section~\ref{sec:numerics}, we study different porous-medium configurations and analyse the range of validity for the assumptions on the permeability and boundary layer constants. Then, we provide detailed information on how to compute the effective model parameters and present a benchmark for the Stokes--Darcy problem with generalised interface conditions including numerical simulation results. Concluding remarks and future work are presented in Section~\ref{sec:summary}.

\section{Coupled flow model}\label{sec:models}

In this paper, we consider the following assumptions on the coupled flow system. The flow domain $\Omega = \Omega_\FF \cup \Omega_\PM \subset \RR^2$ consists of the free-flow region $\Omega_\FF$ and the adjacent porous medium $\Omega_\PM$. The sharp interface $\Gamma$ separating the two flow regions at the macroscale is considered to be straight (Fig.~\ref{fig:domain}, left) and simple, i.e., mass, momentum and energy cannot be stored at or transported along $\Gamma$. We assume that the macroscale and the pore scale are separable, i.e., $\varepsilon=\ell/\mathcal{L} \ll 1$, where $\varepsilon$ is the scale separation parameter, $\ell$ is the characteristic pore length and $\mathcal{L}$ is the macroscopic length of the domain $\Omega$ (Fig.~\ref{fig:domain}, right).

The porous medium is considered to be non-deformable and homogeneous, constructed by a periodic repetition of solid obstacles. We consider the same single-phase and steady-state fluid flow at low Reynolds numbers both in the free-flow domain and through the porous medium. The fluid is supposed to be incompressible and to have constant viscosity. The coupled flow system is assumed to be isothermal.

\begin{figure}
    \centering
    \includegraphics[scale=1.1]{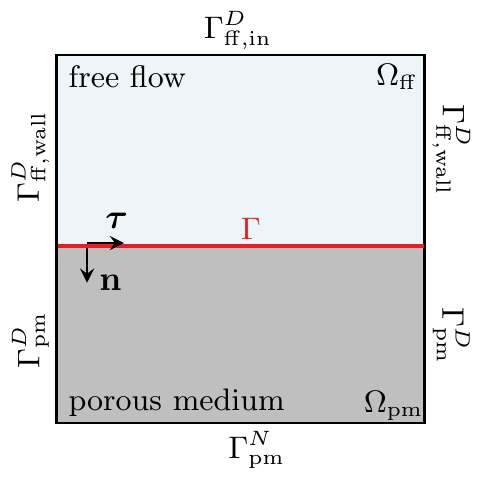} \hspace{11ex}
    \includegraphics[scale=1.1]{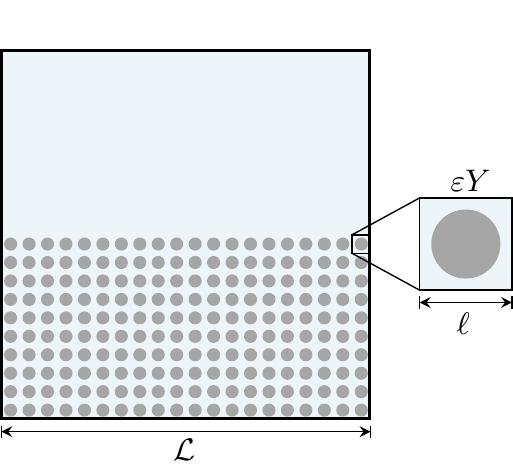} 
    \caption{Schematic coupled flow system at the macroscale (left) and at the pore scale (right) with the periodicity cell $\varepsilon Y$.}
    \label{fig:domain}
\end{figure}

\subsection{Free-flow model}

Under the given assumptions the  Stokes equations describe the fluid flow in the free-flow region
\begin{align}
\deld \vec{v}_\FF = 0 \qquad \text{in} \;\; \Omega_\FF, \label{eq:1p1c-FF-mass}
\\
- \deld \ten T(\vec v_\FF,p_\FF) = \vec 0
\qquad \text{in} \;\; \Omega_\FF, 
\label{eq:1p1c-FF-momentum}
\end{align}
where $\vec v_\FF$ is the fluid velocity,
$p_\FF$ is the fluid pressure, 
$\ten T(\vec v_\FF,p_\FF) = \nabla \vec v_\FF - p_\FF \ten I$ is the non-dimensional stress
tensor and $\ten I$ is the identity tensor. 

On the external boundary of the free-flow domain $\partial 
\Omega_\FF \setminus\Gamma$, the following Dirichlet boundary conditions are imposed
\begin{align}\label{eq:FF-BC}
\vec v_\FF = {\vec v}_\text{in} \hspace*{3.5mm} \text{on} \;\; \Gamma_{\FF,\text{in}}^D, 
\qquad \quad
\vec v_\FF = \vec 0 \hspace*{3.5mm} \text{on} \;\; \Gamma_{\FF,\text{wall}}^D, 
\end{align}
where $\displaystyle {\vec v}_\text{in}$ is the assigned velocity field (Fig.~\ref{fig:domain}, left).

\subsection{Porous-medium model}

In the porous-medium domain, we consider the non-dimensional Darcy flow equations  
\begin{align}\label{eq:PM-mass}
\deld \vec v_\PM &= 0  \hspace*{2.23cm} \text{in} \;\; 
\Omega_\PM, 
\\\label{eq:PM-Darcy}
\vec v_\PM &= -\ten K \del p_\PM  \hspace*{1.cm} \text{in} \;\; 
\Omega_\PM,
\end{align}
where $\vec v_\PM$ is the fluid velocity through the porous medium, $p_\PM$ is the fluid pressure and $\ten K$ is the intrinsic permeability tensor, which is symmetric 
positive definite and bounded. 

On the external boundary of the porous-medium domain $ \partial 
\Omega_\PM \setminus\Gamma$, we prescribe the following boundary conditions, that we consider homogeneous without loss of generality,
\begin{equation}\label{eq:PM-BC}
p_\PM = 0 
\quad \text{on} \;\; \Gamma_\PM^D, 
\qquad \quad
\vec v_\PM \vdot \vec n_\PM = 0 
\quad \text{on} \;\; \Gamma_ 
\PM^N,
\end{equation}
where $\vec n_\PM$ is the unit outward normal vector from the domain $\Omega_\PM$ on its boundary, $\partial 
\Omega_\PM \setminus\Gamma = 
\Gamma_\PM^D \cup \Gamma_\PM^N$, \, $\Gamma_\PM^D \cap \Gamma_\PM^N=\emptyset$, and $\Gamma_\PM^D \neq \emptyset$ (Fig.~\ref{fig:domain}, left).

\subsection{Interface conditions}\label{sec:interface-conditions}

The generalised interface conditions for the Stokes--Darcy problem~\eqref{eq:1p1c-FF-mass},~\eqref{eq:1p1c-FF-momentum} and \eqref{eq:PM-mass},~\eqref{eq:PM-Darcy} proposed in~\cite{Eggenweiler-MMS} 
read
\begin{align}\label{eq:IC-mass}
\vec v_\FF \vdot \vec n &= \vec v_\PM \vdot \vec n \hspace{6.cm} \text{on} \;\Gamma,
\\[1ex]
\label{eq:IC-momentum}
p_\PM &=
-\vec n \vdot \ten T\lrp{\vec v_\FF, p_\FF} \vec n 
-N_s^{bl} \, \vec \tau \vdot \ten T\lrp{\vec v_\FF, p_\FF} \vec n \hspace{1.325cm}  \text{on} \; \Gamma,
\\
\vec v_\FF  \vdot \vec \tau &=  \varepsilon N_\vec{\tau}^{bl}\,   \vec \tau \vdot \ten T\lrp{\vec v_\FF, p_\FF} \vec n  + \varepsilon^2 \sum_{j=1}^2 M_\vec{\tau}^{j,bl}
\xpdrv{p_\PM}{x_j} \hspace{1.cm}
 \text{on} \; \Gamma, \label{eq:IC-tangential}
\end{align}
where $N^{bl}_s$, $N_\vec{\tau}^{bl}= \vec N^{bl} \vdot \vec \tau$ and $M_\vec{\tau}^{j,bl}= \vec M^{j,bl} \vdot \vec \tau$ are boundary layer constants, $\vec n = -\vec n_\PM$ on $\Gamma$, and $\vec \tau$ is the unit tangential vector on $\Gamma$ (Fig.~\ref{fig:domain}, left). 

The interface condition~\eqref{eq:IC-mass} is the conservation of mass across the interface. The coupling condition~\eqref{eq:IC-momentum} is an extension of the balance of normal forces. In the case of isotropic porous media 
$N_s^{bl} =0$, that leads to the classical balance of normal forces at the fluid--porous interface, e.g.,~\cite{Discacciati_Miglio_Quarteroni_02, Layton_Schieweck_Yotov_03}. 
The interface condition~\eqref{eq:IC-tangential} is a generalisation of the Beavers--Joseph condition~\cite{Beavers_Joseph_67}:
\begin{align}
  \left(\vec v^\FF - \vec v^\PM\right) \vdot \vec \tau &= - 
   \frac{\sqrt{\ten K}}{\alpha_\BJ} \, \vec \tau \vdot \ten T\lrp{\vec v_\FF, p_\FF} \vec n \hspace{6.5ex} \text{on } \Gamma, \label{eq:IC-BJ}
\end{align}
where $\alpha_\BJ>0$ is the Beavers--Joseph parameter. Notice that the permeability tensor $\ten K$ is of order $\mathcal{O}(\varepsilon^2)$, i.e., $\ten K=\varepsilon^2 \tilde {\ten K}$, where $\tilde {\ten K}$ is computed in the standard way using  homogenisation theory. 
Moreover, $N_{\vec \tau}^{bl}<0$ and $M_{\vec \tau}^{1,bl}<0$. Condition~\eqref{eq:IC-tangential} can be compared to the Beavers--Joseph condition~\eqref{eq:IC-BJ} considering $-\varepsilon N_{\vec\tau}^{bl} \sim \sqrt{\ten K} \alpha_\BJ^{-1}$ and splitting the last term in equation~\eqref{eq:IC-tangential} into the tangential porous-medium velocity $\vec v^\PM \vdot \vec \tau$ and the remaining part, which is not necessarily zero in the generalised case. Note that for isotropic porous media $M_\vec{\tau}^{2,bl}=0$. 
For a thorough discussion and physical interpretation of the new interface condition \eqref{eq:IC-tangential} we refer the reader to \cite{Eggenweiler-MMS}.

The generalised coupling conditions~\eqref{eq:IC-mass}--\eqref{eq:IC-tangential} have several advantages over the classical conditions (conservation of mass, balance of normal forces, Beavers--Joseph condition). In Figure~\ref{fig:advantages}, we present tangential velocity profiles for a setting similar to~\cite{Eggenweiler-MMS}, where the flow is arbitrary (Fig.~\ref{fig:advantages}, left) and parallel (Fig.~\ref{fig:advantages}, right) to the fluid--porous interface. Macroscale Stokes--Darcy problems with the classical (profile:~classical IC) and generalised (profile:~generalised IC) interface conditions are compared against the pore-scale resolved simulations (profile:~pore-scale). One can observe that the generalised conditions are suitable for arbitrary flow directions to the interface (Fig.~\ref{fig:advantages}, left) and are more accurate than the classical conditions for parallel flows  (Fig.~\ref{fig:advantages}, right). Moreover, the interface conditions~\eqref{eq:IC-mass}--\eqref{eq:IC-tangential} contain no undetermined parameters such as $\alpha_\BJ$, which needs to be fitted. For more details on the validation of the generalised interface conditions and their comparison to the classical conditions we refer the reader  to~\cite{Eggenweiler-MMS}.

\begin{figure}
    \centering
    \includegraphics[scale=0.9]{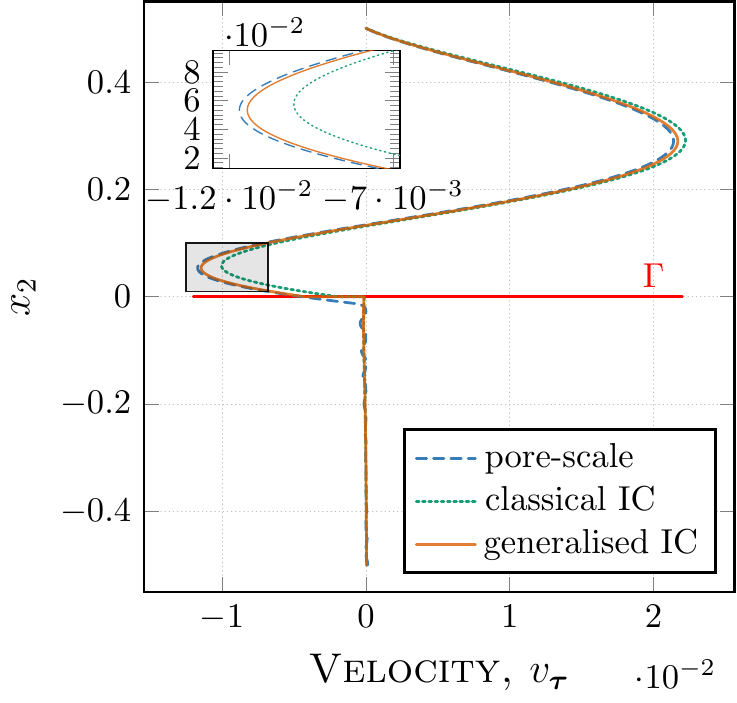}
    \quad \quad  \includegraphics[scale=0.9]{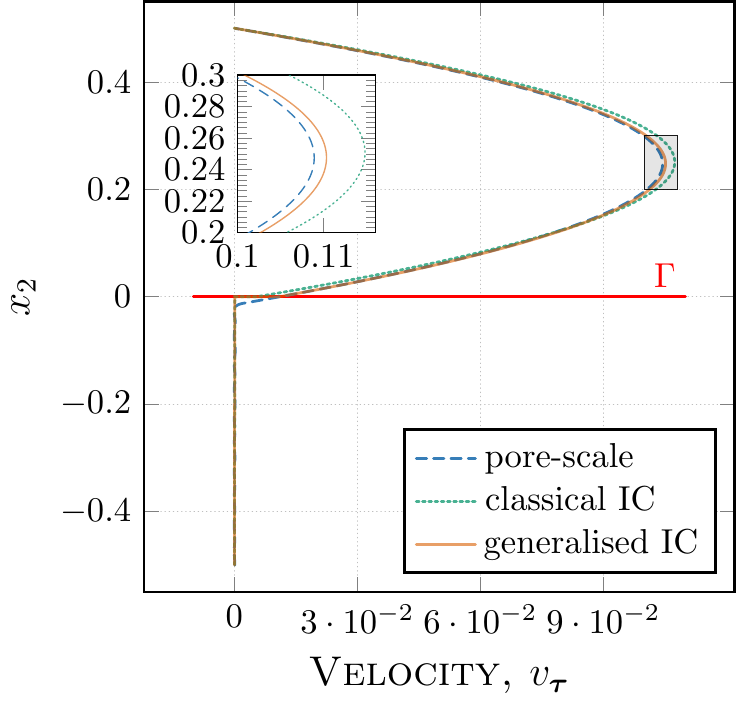}
    \caption{Comparison of the generalised and the classical interface conditions for arbitrary (left) and parallel (right) flows to the interface based on the flow problem  from~\cite{Eggenweiler-MMS}. }
    \label{fig:advantages}
\end{figure}

\section{Weak formulation and analysis}
\label{sec:weak-form}
In this section, we study the weak formulation of the coupled Stokes--Darcy problem \eqref{eq:1p1c-FF-mass}--\eqref{eq:IC-tangential} and analyse its well-posedness.

\subsection{Weak formulation of the Stokes--Darcy problem}


We introduce the following functional spaces 
\begin{align*}
    \mathrm{H}_\FF &:= \left\{ \vec u \in \xHone(\Omega_\FF)^2: \vec u = \vec 0 \text{ on } \partial\Omega_{\FF} \setminus \Gamma \right\},
    \\
    \mathrm{H}_{\FF,\Gamma} &:= \left\{ \vec u \in \xHone(\Omega_\FF)^2: \vec u = \vec 0 \text{ on } \Gamma_{\FF,\text{wall}}^D  \cup \Gamma \right\}, 
    \ \
    \mathrm{Q} := \xLn{2}(\Omega_\FF),
    \\
    \mathrm{H}_\PM &:= \left\{ \psi \in \xHone(\Omega_\PM): \psi = 0 \text{ on } \Gamma_{\PM}^D \right\},
    \ \
     \mathrm{W} :=\mathrm{H}_\FF \times \mathrm{H}_\PM,
\end{align*}
and norms
\begin{align*}
    & \norm{\varphi}_{0,i} := \norm{\varphi}_{\xLn{2}(\Omega_i)}  \quad \forall \varphi  \in \xLn{2}(\Omega_i),
    \quad \qquad 
    \norm{\varphi}_{1,i} := \norm{\varphi}_{\xHone(\Omega_i)} \quad  \forall \varphi  \in \xHone(\Omega_i),
    \\
    & \norm{\underline w}_W := \left(\norm{\vec w}_{1,\FF}^2 +  \norm{\psi}_{1, \PM}^2 \right)^{1/2} \quad \forall \underline w = (\vec w, \psi) \in \mathrm{W},
\end{align*}
where the subscript $i\in \{\FF,\PM\}$ indicates in which domain a function is defined. Analogous notations apply for vector-valued functions.
Additionally, on the interface $\Gamma$ we consider  the trace space $H_{00}^{1/2}(\Gamma)$ and denote its dual space by $(H_{00}^{1/2}(\Gamma))'$, see, e.g.,~\cite{Lions_68}.
In the following, for the sake of simplicity, we waive $\text d \vec x$ and $\text{d} S$ in volume and boundary integrals and write $g =g(\vec x)$ for $\vec x \in \Omega_i$, $i\in \{\FF,\PM\}$.

In order to obtain the weak formulation of the Stokes--Darcy problem~\eqref{eq:1p1c-FF-mass}--\eqref{eq:IC-tangential}, we multiply equation~\eqref{eq:1p1c-FF-momentum} by a test function $\vec u \in \mathrm{H}_\FF$ and proceeding in a standard way, we obtain
\begin{align}
    0 &= -\int_{\Omega_\FF} \!\! \left( \deld \ten T\left(\vec v_\FF,p_\FF\right) \right)\vdot \vec u 
    = 
    -\int_{\partial \Omega_\FF}  \!\! \ten T\lrp{\vec v_\FF, p_\FF} \vec n \vdot \vec u  
    +\int_{\Omega_\FF}\!\! \nabla \vec v_\FF \colon \nabla \vec u
    -\int_{\Omega_\FF} \!\!  p_\FF \deld \vec u \notag
    \\
    &=  -\int_{\Gamma} \ten T\lrp{\vec v_\FF, p_\FF} \vec n \vdot \vec u
    +\int_{\Omega_\FF} \nabla \vec v_\FF \colon \nabla \vec u
    -\int_{\Omega_\FF}  p_\FF \deld \vec u \  \qquad \forall \vec u \in \mathrm{H}_\FF \ .
    \label{eq:weak-FF-first}
\end{align}


Splitting the stress tensor $\ten T(\vec v_\FF, p_\FF)$ in its normal and tangential components and applying the interface conditions~\eqref{eq:IC-momentum} and~\eqref{eq:IC-tangential} for the integral term over $\Gamma$ in~\eqref{eq:weak-FF-first}, we get
\begin{align}
    \int_{\Gamma}   \ten T\lrp{\vec v_\FF, p_\FF}\vec n  \vdot \vec u
    &= \int_{\Gamma} \left( \vec n \vdot  \ten T\lrp{\vec v_\FF, p_\FF}\vec n \right) ( \vec u \vdot \vec n)
    +\int_{\Gamma}  \left( \vec \tau \vdot \ten T\lrp{\vec v_\FF, p_\FF}\vec n \right) ( \vec u \vdot \vec \tau)
    \notag
    \\
    &
    =\int_{\Gamma} \left(- p_\PM - N_s^{bl} \vec \tau \vdot \ten T\lrp{\vec v_\FF, p_\FF} \vec n  \right) (\vec u \vdot \vec n)
    +\int_{\Gamma}  \left( \vec \tau \vdot \ten T\lrp{\vec v_\FF, p_\FF}\vec n \right) (\vec u \vdot \vec \tau)
    \notag
    \\
    &=-\int_{\Gamma} p_\PM (\vec u \vdot \vec n)
    + \int_{\Gamma} \left(  \vec \tau \vdot \ten T\lrp{\vec v_\FF, p_\FF} \vec n  \right) \left( (\vec u \vdot \vec \tau) -N_s^{bl} ( \vec u \vdot \vec n) \right)
    \notag
    \\
    &
    =
    - \int_{\Gamma} p_\PM (\vec u \vdot \vec n ) \notag
     + \int_{\Gamma} \left(  {(N_{\vec \tau}^{bl})}^{-1} \varepsilon^{-1} (\vec v_\FF  \vdot \vec \tau) - {(N_{\vec \tau}^{bl})}^{-1} \varepsilon \sum_{j=1}^2 M_{\vec \tau}^{j,bl}  \xpdrv{p_\PM}{x_j}  \right) \left((\vec u \vdot \vec \tau) -N_s^{bl} (\vec u \vdot \vec n)
     \right)
    \notag
    \\
    &= - \int_{\Gamma} p_\PM (\vec u \vdot \vec n)
    - \int_{\Gamma} {(N_{\vec \tau}^{bl})}^{-1} N_s^{bl} \, \varepsilon^{-1}  (\vec v_\FF  \vdot \vec \tau ) \left(\vec u \vdot \vec n
    \right)
    + \int_{\Gamma}  {(N_{\vec \tau}^{bl})}^{-1} \varepsilon^{-1}  (\vec v_\FF  \vdot \vec \tau) \left( \vec u \vdot \vec \tau \right)
    \notag
    \\
    & \quad + \int_{\Gamma} \left(  {(N_{\vec \tau}^{bl})}^{-1} N_s^{bl} \, \varepsilon \sum_{j=1}^2 M_{\vec \tau}^{j,bl}  \xpdrv{p_\PM}{x_j}   \right)  (\vec u \vdot \vec n)
    - \int_{\Gamma} \left(  {(N_{\vec \tau}^{bl})}^{-1} \varepsilon \sum_{j=1}^2 M_{\vec \tau}^{j,bl}  \xpdrv{p_\PM}{x_j}  \right) \left(\vec u \vdot \vec \tau \right) \ .
    \label{eq:stress-on-interface}
\end{align}
We introduce a continuous lifting operator $E_\FF: \xHonehalf(\Gamma_{\FF,\text{in}}^D)^2 \rightarrow  \mathrm{H}_{\FF,\Gamma}$ and split the free-flow velocity $\vec v_\FF = \vec v_\FF^0 + E_\FF\vec v_\text{in}$ with $\vec v_\FF^0 \in \mathrm{H}_\FF$. 
Substituting~\eqref{eq:stress-on-interface} into the weak formulation~\eqref{eq:weak-FF-first}, we get
\begin{align}
    -\int_{\Omega_\FF} \nabla \left( E_\FF\vec v_\text{in} \right) \colon \nabla \vec u
    &= 
    \int_{\Omega_\FF} \nabla \vec v_\FF^0 \colon \nabla \vec u
    -\int_{\Omega_\FF}  p_\FF (\deld \vec u)
    + \int_{\Gamma} p_\PM (\vec u \vdot \vec n) 
    + \int_{\Gamma} {(N_{\vec \tau}^{bl})}^{-1} N_s^{bl} \, \varepsilon^{-1} (\vec v_\FF^0  \vdot \vec \tau)  \left(\vec u \vdot \vec n
    \right) 
    \notag
    \\
    & \quad 
    - \int_{\Gamma}  {(N_{\vec \tau}^{bl})}^{-1} \varepsilon^{-1} (\vec v_\FF^0  \vdot \vec \tau) \left( \vec u \vdot \vec \tau \right) 
    - \int_{\Gamma} \left(  {(N_{\vec \tau}^{bl})}^{-1} N_s^{bl} \, \varepsilon \sum_{j=1}^2 M_{\vec \tau}^{j,bl}  \xpdrv{p_\PM}{x_j} \right) (\vec u \vdot \vec n)
    \notag
    \\
    & \quad 
    + \int_{\Gamma} \left(  {(N_{\vec \tau}^{bl})}^{-1} \varepsilon \sum_{j=1}^2 M_{\vec \tau}^{j,bl}  \xpdrv{p_\PM}{x_j} \right) \left(\vec u \vdot \vec \tau \right), \qquad \forall \vec u \in \mathrm{H}_\FF \, .
    \label{eq:weak-FF-a}
\end{align}
The weak form of equation~\eqref{eq:1p1c-FF-mass} reads
\begin{align}
    - \int_{\Omega_\FF} \left( \nabla \vdot \vec v_\FF^0 \right) q = \int_{\Omega_\FF} \left( \nabla \vdot (E_\FF\vec  v_\text{in} )\right)q \qquad \forall q \in \mathrm{Q} \, .
\end{align}


In the porous medium, we consider the scalar elliptic formulation obtained by combining equation~\eqref{eq:PM-mass} with Darcy's law~\eqref{eq:PM-Darcy}. Multiplying the resulting equation by a test function $\psi \in \mathrm{H}_\PM$, integrating over $\Omega_\PM$ and applying the interface condition~\eqref{eq:IC-mass}, we get
\begin{align}
      \int_{\Omega_\PM} \left( \ten K \nabla p_\PM \right) \vdot \nabla \psi
      -\int_{\Gamma} \left(  \vec v_\FF \vdot  \vec n \right) \psi = 0  \qquad \forall \psi \in \mathrm{H}_\PM.
\end{align}

For all $\underline v = (\vec v, \varphi), \; \underline w = (\vec w, \psi) \in \mathrm{W}
$ and $q \in \mathrm{Q}$, we define the following bilinear forms
\begin{align}
    \mathcal{A}(\underline v,\underline w) =& \int_{\Omega_\FF} \nabla \vec v \colon \nabla \vec w 
    + \int_{\Omega_\PM} (\ten K \nabla \varphi) \vdot  \nabla \psi   
    +\int_{\Gamma} \varphi ( \vec w \vdot \vec n)
    - \int_{\Gamma} (\vec v \vdot \vec n) \psi
    + \int_{\Gamma} {(N_{\vec \tau}^{bl})}^{-1} N_s^{bl} \, \varepsilon^{-1} (\vec v  \vdot \vec \tau ) \left(\vec w \vdot \vec n  \right) \notag
    \\
    & 
    - \int_{\Gamma}  {(N_{\vec \tau}^{bl})}^{-1} \varepsilon^{-1} (\vec v  \vdot \vec \tau ) \left( \vec w \vdot \vec \tau \right) \notag
    - \int_{\Gamma} \left(  {(N_{\vec \tau}^{bl})}^{-1} N_s^{bl}  \, \varepsilon \sum_{j=1}^2 M_{\vec \tau}^{j,bl} \xpdrv{\varphi}{x_j}   \right) ( \vec w \vdot \vec n)\notag
    \\
    & + \int_{\Gamma} \left(  {(N_{\vec \tau}^{bl})}^{-1} \varepsilon \sum_{j=1}^2 M_{\vec \tau}^{j,bl}  \xpdrv{\varphi}{x_j}  \right) \left(\vec w \vdot \vec \tau \right) \, ,
    \label{eq:definitionA}  
    \\
    \mathcal{B}(\underline w, q) =& -\int_{\Omega_\FF} (\nabla \vdot \vec w) q \,  ,
    \label{eq:definitionB}
\end{align}
and the  linear functionals
\begin{align}
    \mathcal{F}(\underline w) =& -\int_{\Omega_\FF} \nabla \left(E_\FF \vec v_\text{in}\right) \colon \nabla \vec w \, ,
    \qquad \quad
    \mathcal{G}(q) = \int_{\Omega_\FF} (\nabla \vdot \left( E_\FF \vec v_\text{in} \right)) q \, .
    \label{eq:FG}
\end{align}

Making use of these notations, the weak formulation of the coupled Stokes--Darcy problem~\eqref{eq:1p1c-FF-mass}--\eqref{eq:IC-tangential} reads:

find $\underline{u} = 
(\vec v^0_\FF, p_\PM)\in \mathrm{W}$ and $ p_\FF \in \mathrm{Q}$, such that
\begin{align}
    \mathcal{A}(\underline u, \underline w) + \mathcal{B}(\underline w, p_\FF) &=  \mathcal{F}(\underline w) \qquad \forall \underline w = (\vec w, \psi) \in \mathrm{W} \, 
    , \label{weak-form-1}
    \\
     \mathcal{B}(\underline u, q) &= \mathcal{G}(q)  \qquad \ \ \forall q \in \mathrm{Q} \, .
     \label{weak-form-2}
\end{align}


\subsection{Analysis of the Stokes--Darcy model}
\label{sec:analysis}

In this section, we prove the well-posedness of the Stokes--Darcy problem with the generalised interface conditions~\eqref{eq:IC-mass}--\eqref{eq:IC-tangential}. 
Since we consider the interface $\Gamma$ to be straight (see Sect.~\ref{sec:models}), the tangential and normal vectors at the interface are constant. Therefore, taking the tangential and normal components $ \vec v \vdot \vec \tau$ and $\vec v \vdot  \vec n$  of a suitable vector function $\vec v $ on $\Gamma$ does not reduce the regularity of the trace $\vec v|_\Gamma$. 
In the following, we present the results for a horizontal interface $\Gamma$, so that $\vec \tau= \vec e_1$ and $\vec n = -\vec e_2$ (Fig.~\ref{fig:domain}, left).

We prove the well-posedness for isotropic porous media, i.e., $\ten K = k \ten I $ with $k>0$ constant and equal to $k = \varepsilon^2 \tilde k$, where $\tilde k$ is the non-dimensional permeability (see Section~\ref{sec:interface-conditions}). Note that in this case the boundary layer constants $N_s^{bl}=0$ and $M_{\vec \tau}^{2,bl}=0$. Thus, the bilinear form $\mathcal{A}(\underline u, \underline w)$ in~\eqref{weak-form-1} reduces to

\begin{align}
    \mathcal{A}(\underline u,\underline w) =& \int_{\Omega_\FF} \nabla \vec v_\FF^0 \colon \nabla \vec w 
    + \int_{\Omega_\PM} (k \nabla p_\PM) \vdot  \nabla \psi     
    +\int_{\Gamma} p_\PM(\vec w \vdot \vec n)
    - \int_{\Gamma} (\vec v_\FF^0 \vdot \vec n) \psi
    - \int_{\Gamma}  {(N_{\vec \tau}^{bl})}^{-1} \varepsilon^{-1} ( \vec v_\FF^0  \vdot \vec \tau) \left( \vec w \vdot \vec \tau \right) \notag
    \\
    & + \int_{\Gamma} {(N_{\vec \tau}^{bl})}^{-1} \varepsilon \left(   
    M_{\vec \tau}^{1,bl} 
    \xpdrv{p_\PM}{x_1}  \right) \left(\vec w \vdot \vec \tau \right) \, .
    \label{eq:definitionA-isotropic}
\end{align}
Section ~\ref{sec:auxiliary-results} introduces some auxiliary results, while the well-posedness of the coupled Stokes--Darcy problem is proved in Section~\ref{sec:well-posedness}. 

\subsubsection{Auxiliary results}
\label{sec:auxiliary-results}

Taking into account the Poincar\'{e} inequality
\begin{align}
    \exists C_{P,i}>0 \quad \text{such that } \norm{f}_{0,i} \leq C_{P,i} \norm{\nabla f}_{0, i}  \qquad \forall f \in \mathrm{H}_i \,,
    \label{eq:Poincare}
\end{align}
and the definition of the $\xHone$-norm 
$ \norm{f}_{1,i}^2 = \norm{f}_{0,i}^2 + \norm{\nabla f}_{0,i}^2$ for $i\in \{\FF,\PM\}$,
we get 
\begin{align}
    \norm{f}_{1,i}^2 \leq \kappa_i \norm{\nabla f}_ {0,i}^2  \qquad \forall f \in \mathrm{H}_i \,,
    \label{eq:kappa}
\end{align}
where the constant $\kappa_i = 1+ {C_{P,i}}^2 >1$.

We consider the following trace inequalities (see, e.g.,~\cite{Lions_68}):
\begin{align}
    &\exists C_f>0 \quad \text{such that } \norm{\vec v|_\Gamma}_{\Hzz(\Gamma)} \leq C_f \norm{\vec v}_{1, \FF} \qquad \forall \vec v \in \mathrm{H}_\FF \, , \label{eq:trace-ff}
    \\
    &\exists C_p>0 \quad \text{such that } \norm{\psi|_\Gamma}_{\Hzz(\Gamma)} \leq C_p \norm{\psi}_{1,\PM} \qquad \forall f \in \mathrm{H}_\PM \; . \label{eq:trace-pm}
\end{align}

In the two-dimensional case, for $\varphi \in \xHone(\Omega_\PM)$ we have
\begin{align*}
    \mathbf{curl} \, \varphi = \left( \xpdrv{\varphi}{x_2}, - \xpdrv{\varphi}{x_1}\right)^\top \ .
\end{align*}
Since $\nabla \varphi \in \xLn{2}(\Omega_\PM)$, we get  $\mathbf{curl} \, \varphi \in \xLn{2}(\Omega_\PM)$ and being $\nabla \vdot (\mathbf{curl} \, \varphi) = 0$, we have $\mathbf{curl} \, \varphi \in \mathrm{H}(\operatorname{div};\Omega_\PM):=\{ \vec u \in \xLn{2}(\Omega_\PM)^2 \,:\, \nabla \vdot \vec u \in \xLn{2}(\Omega_\PM)\}$. 
Thus, due to the classical trace results in $\mathrm{H}(\mbox{div};\Omega_\PM)$, see e.g., ~\cite[Lemma 20.2]{Tartar}, \cite[Chapter IX, Thm. 1]{Dautray_90}, there exists a positive constant $C_{\vec \tau} >0$ such that
\begin{equation}\label{eq:inequality_step1}
    \norm{ \mathbf{curl}\,\varphi\vdot \vec n }_{\mathrm{H}^{-1/2}(\partial\Omega_\PM)} 
    \leq C_{\vec \tau} \norm{ \mathbf{curl}\, \varphi }_{\mathrm{H}(\operatorname{div};\Omega_\PM)} = C_{\vec \tau} \norm{ \mathbf{curl}\,\varphi }_{0,\PM} 
    = C_{\vec \tau} \norm{ \nabla \varphi }_{0,\PM} \leq C_{\vec \tau} \norm{ \varphi }_{1,\PM} \, .
\end{equation}
Since $\Gamma \subsetneq \partial \Omega_\PM$, we have $(\mathbf{curl}\,\varphi\vdot \vec n)_{\vert\Gamma} \in (\Hzz(\Gamma))'$ with $\Hzz (\Gamma) = \{ u \in \xHonehalf(\partial \Omega_\PM) \, : \, \mbox{supp}\, u \subset \overline{\Gamma} \}$ and
\begin{equation}\label{eq:inequality_step2}
    \norm{ (\mathbf{curl}\,\varphi\vdot \vec n)_{\vert\Gamma} }_{(\Hzz(\Gamma))'} \leq \norm{ \mathbf{curl}\,\varphi\vdot \vec n }_{\mathrm{H}^{-1/2}(\partial \Omega_\PM)} \, .
\end{equation}
Therefore, as obviously $\nabla \varphi \cdot \vec \tau = \mathbf{curl} \, \varphi \cdot \vec n$, from \eqref{eq:inequality_step1} and \eqref{eq:inequality_step2} we conclude that
\begin{equation}
    \exists C_{\vec \tau} > 0 \quad \text{such that }
    \norm{(\nabla \varphi\vdot \vec \tau)_{\vert\Gamma}}_{(\Hzz(\Gamma))'} \leq C_{\vec \tau} \norm{ \varphi }_{1,\PM} \qquad \forall \varphi \in \xHone(\Omega_\PM)\, .
    \label{eq:inequality-tangential}
\end{equation}
Let us also remark that, considering the geometrical setting provided in Figure~\ref{fig:domain} (left) where $\vec \tau = \vec e_1$, we have
\begin{equation*}
\nabla p_\PM \vdot \vec \tau = \xpdrv{p_\PM}{x_1} \quad \mbox{on } \Gamma \, .
\end{equation*}
Finally, notice that the first three integrals over $\Gamma$ in the bilinear form $\mathcal{A}$ given in~\eqref{eq:definitionA-isotropic} should be understood as scalar products in $\Hzz(\Gamma)$, while the last integral must be interpreted as the duality pairing
\begin{align}
    &\int_{\Gamma}  ({N_{\vec \tau}^{bl}})^{-1} \varepsilon \left(   
    \! M_{\vec \tau}^{1,bl}
    \xpdrv{p_\PM}{x_1} \!  \right) \left(\vec w \vdot \vec \tau \right)
    =
    \int_{\Gamma} \varepsilon \frac{M_{\vec\tau}^{1,bl}}{N_{\vec\tau}^{bl}}   (\nabla p_\PM \vdot \vec \tau) (\vec w \vdot \vec \tau ) = \varepsilon 
    \frac{M_{\vec\tau}^{1,bl}}{N_{\vec\tau}^{bl}}  
    \langle \nabla p_\PM \vdot \vec \tau, \vec w \vdot \vec \tau 
    \rangle_{(\Hzz(\Gamma))',\Hzz(\Gamma)} \, .
    \label{eq:duality}
\end{align}

\subsubsection{Well-posedness of the coupled problem}
\label{sec:well-posedness}

Using the auxiliary results from Section~\ref{sec:auxiliary-results}, we can now prove the well-posedness of the Stokes--Darcy problem with the generalised interface conditions.

\begin{thrm}
The Stokes--Darcy problem~\eqref{weak-form-1},~\eqref{weak-form-2} is well-posed under the following assumption
\begin{equation}
    \tilde k  >  \kappa_\PM \kappa_\FF (C_{\vec \tau} C_{f} )^2 \left( \frac{M_{\vec \tau}^{1,bl}}{2 N_{\vec \tau}^{bl}}\right)^2 \, .
    \label{eq:coercivity-condition-result}
\end{equation}

\end{thrm}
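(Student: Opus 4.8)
The plan is to apply the standard saddle-point (Brezzi) theory to the pair \eqref{weak-form-1}--\eqref{weak-form-2}: we must verify (i) continuity of $\mathcal{A}$, $\mathcal{B}$, $\mathcal{F}$, $\mathcal{G}$ on $\mathrm{W}\times\mathrm{Q}$; (ii) an inf--sup (LBB) condition for $\mathcal{B}$ on $\mathrm{W}\times\mathrm{Q}$; and (iii) coercivity of $\mathcal{A}$ on the kernel $\mathrm{W}_0 := \{\underline w\in\mathrm{W} : \mathcal{B}(\underline w,q)=0\ \forall q\in\mathrm{Q}\}$. Continuity of $\mathcal{A}$ is routine: the volume terms are bounded by Cauchy--Schwarz and boundedness of $k$, the interface terms involving $\vec v_\FF^0\vdot\vec\tau$, $p_\PM$, $\vec w\vdot\vec n$ etc.\ are controlled using the trace inequalities \eqref{eq:trace-ff}, \eqref{eq:trace-pm} (recalling that since $\Gamma$ is horizontal, taking normal/tangential components does not reduce trace regularity), and the one duality-pairing term \eqref{eq:duality} is bounded via \eqref{eq:inequality-tangential} paired against the $\Hzz(\Gamma)$-norm of $\vec w\vdot\vec\tau$, itself bounded by \eqref{eq:trace-ff}. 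Continuity of $\mathcal{B}$, $\mathcal{F}$, $\mathcal{G}$ is immediate from boundedness of the lifting operator $E_\FF$. The inf--sup condition for $\mathcal{B}(\underline w,q)=-\int_{\Omega_\FF}(\deld\vec w)q$ reduces to the classical Stokes inf--sup on $\mathrm{H}_\FF\times\mathrm{Q}$ (take $\psi=0$ and use the known result on $\Omega_\FF$ with the Dirichlet conditions built into $\mathrm{H}_\FF$), so it transfers to $\mathrm{W}$ directly.

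The crux is the kernel-coercivity estimate (iii), and this is where condition \eqref{eq:coercivity-condition-result} enters. For $\underline w=(\vec w,\psi)\in\mathrm{W}$ we compute $\mathcal{A}(\underline w,\underline w)$: the two volume terms give $\norm{\nabla\vec w}_{0,\FF}^2 + k\norm{\nabla\psi}_{0,\PM}^2$, which are positive. The crucial observation is that the two symmetric coupling terms $\int_\Gamma \psi(\vec w\vdot\vec n)$ and $-\int_\Gamma(\vec w\vdot\vec n)\psi$ cancel exactly (this is the usual antisymmetric Stokes--Darcy coupling). The term $-\int_\Gamma (N_{\vec\tau}^{bl})^{-1}\varepsilon^{-1}(\vec w\vdot\vec\tau)^2$ is nonnegative because $N_{\vec\tau}^{bl}<0$ (stated after \eqref{eq:IC-BJ}), so it helps. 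The only troublesome term is the last one, $\int_\Gamma (N_{\vec\tau}^{bl})^{-1}\varepsilon M_{\vec\tau}^{1,bl}(\nabla p_\PM\vdot\vec\tau)(\vec w\vdot\vec\tau)$, which is indefinite. The plan is to bound it in absolute value by the duality pairing estimate: it is at most $\varepsilon\,|M_{\vec\tau}^{1,bl}/N_{\vec\tau}^{bl}|\cdot\norm{\nabla\psi\vdot\vec\tau}_{(\Hzz(\Gamma))'}\norm{\vec w\vdot\vec\tau}_{\Hzz(\Gamma)}$, then use \eqref{eq:inequality-tangential} and \eqref{eq:trace-ff} to get the bound $\varepsilon\,|M_{\vec\tau}^{1,bl}/N_{\vec\tau}^{bl}|\,C_{\vec\tau}C_f\,\norm{\psi}_{1,\PM}\norm{\vec w}_{1,\FF}$, and finally Young's inequality with a carefully chosen weight to split this into $\delta\norm{\psi}_{1,\PM}^2 + (4\delta)^{-1}(\varepsilon M_{\vec\tau}^{1,bl}/N_{\vec\tau}^{bl})^2(C_{\vec\tau}C_f)^2\norm{\vec w}_{1,\FF}^2$.

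To close the estimate we absorb the $\norm{\vec w}_{1,\FF}^2$ piece using $\norm{\vec w}_{1,\FF}^2\le\kappa_\FF\norm{\nabla\vec w}_{0,\FF}^2$ from \eqref{eq:kappa} against the full $\norm{\nabla\vec w}_{0,\FF}^2$ from the first volume term, and the $\norm{\psi}_{1,\PM}^2$ piece using $\norm{\psi}_{1,\PM}^2\le\kappa_\PM\norm{\nabla\psi}_{0,\PM}^2$ against $k\norm{\nabla\psi}_{0,\PM}^2=\varepsilon^2\tilde k\norm{\nabla\psi}_{0,\PM}^2$. Choosing the Young weight optimally (roughly $\delta$ so that the two absorptions balance — e.g.\ taking the split symmetric with factor $1/2$ inside, which is exactly why the ``$2$'' appears in \eqref{eq:coercivity-condition-result}), the two leftover coefficients must be positive, and one checks that this is equivalent to $\tilde k > \kappa_\PM\kappa_\FF(C_{\vec\tau}C_f)^2\bigl(M_{\vec\tau}^{1,bl}/(2N_{\vec\tau}^{bl})\bigr)^2$, i.e.\ precisely \eqref{eq:coercivity-condition-result}; note the $\varepsilon^2$ from $k$ cancels the $\varepsilon^2$ from the coupling term's coefficient, so the condition is $\varepsilon$-independent as written. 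This yields $\mathcal{A}(\underline w,\underline w)\ge\alpha\norm{\underline w}_W^2$ for some $\alpha>0$ on all of $\mathrm{W}$ (hence a fortiori on $\mathrm{W}_0$), and the generalised Lax--Milgram/Brezzi theorem then gives existence and uniqueness of $(\underline u,p_\FF)\in\mathrm{W}\times\mathrm{Q}$. The main obstacle is handling the indefinite tangential-pressure coupling term correctly: it lives only in the negative Sobolev space $(\Hzz(\Gamma))'$, so one cannot estimate it by an $\xLn2(\Gamma)$ trace of $\nabla p_\PM$ (which need not exist), and the whole point of the auxiliary results in Section~\ref{sec:auxiliary-results} — in particular the $\mathrm{H}(\diver)$ trace bound \eqref{eq:inequality-tangential} — is to give the right duality estimate; keeping careful track of the constants through this estimate and the Young splitting is what produces the sharp threshold \eqref{eq:coercivity-condition-result}.
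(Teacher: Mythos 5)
Your proposal is correct and follows essentially the same route as the paper: Babu\v ska--Brezzi theory, with the key step being coercivity of $\mathcal{A}$ on all of $\mathrm{W}$, obtained by cancelling the antisymmetric coupling terms, using the sign of $N_{\vec\tau}^{bl}$, estimating the tangential-pressure term through the duality pairing \eqref{eq:duality} with \eqref{eq:inequality-tangential} and \eqref{eq:trace-ff}, and closing with Young's inequality and \eqref{eq:kappa} to arrive at exactly the threshold \eqref{eq:coercivity-condition-result}. Your Young splitting assigns the free weight $\delta$ to the porous-medium term rather than the free-flow term as the paper does, but this is an equivalent relabelling and yields the same optimal condition.
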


\begin{proof} The proof uses the classical Babu\v ska-Brezzi theory for the well-posedness of saddle-point problems \cite{Brezzi:1974:OEU}. The continuity of the functionals $\mathcal{F}$ and $\mathcal{G}$ given by~\eqref{eq:FG} is straightforward as well as the continuity and coercivity of the bilinear form $\mathcal{B}$ given by~\eqref{eq:definitionB}. Thus, we focus only on the continuity and coercivity of the bilinear form $\mathcal{A}$ presented in~\eqref{eq:definitionA-isotropic}, starting from the former property.

Using the Cauchy--Schwarz inequality, the Poincar\'{e} inequality~\eqref{eq:Poincare}, the trace inequalities~\eqref{eq:trace-ff}, \eqref{eq:trace-pm}, \eqref{eq:inequality-tangential}, and result \eqref{eq:duality}, we get
\begin{align}
    |\mathcal{A}(\underline u,\underline w)| =& \bigg| \int_{\Omega_\FF} \nabla \vec v_\FF^0 \colon \nabla \vec w 
    + \int_{\Omega_\PM} (k \nabla p_\PM) \vdot  \nabla \psi     
    +\int_{\Gamma} p_\PM(\vec w \vdot \vec n)
    - \int_{\Gamma} (\vec v_\FF^0 \vdot \vec n) \psi
    - \int_{\Gamma}  {(N_{\vec \tau}^{bl})}^{-1} \varepsilon^{-1} ( \vec v_\FF^0  \vdot \vec \tau) \left( \vec w \vdot \vec \tau \right) \notag
    \\
    & + \int_{\Gamma}  {(N_{\vec\tau}^{bl})}^{-1} \varepsilon \left(  
    M_{\vec \tau}^{1,bl}
    \xpdrv{p_\PM}{x_1}  \right) \left(\vec w \vdot \vec \tau \right) \notag \bigg| 
    \notag
    \\
    \leq &\norm{\vec v_\FF^0}_{1,\FF}  \norm{ \vec w}_{1,\FF} +  k \norm{ p_\PM}_{1,\PM} \norm{\psi}_{1,\PM} + C_{f}C_{p} \norm{p_\PM}_{1,\PM} \norm{\vec w}_{1,\FF} + C_{f}C_{p}  \norm{\vec v_\FF^0}_{1,\FF} \norm{\psi}_{1,\PM}
    \notag
    \\
    & 
    + \varepsilon^{-1} C_{f}^2 \frac{1}{|N_{\vec\tau}^{bl}|}  \norm{\vec v_\FF^0}_{1,\FF} \norm{\vec w}_{1,\FF}
    + \varepsilon C_{\vec \tau} C_{f}\left|\frac{M_{\vec\tau}^{1,bl}}{N_{\vec\tau}^{bl}}\right|  \norm{p_\PM}_{1,\PM} \norm{\vec w}_{1,\FF} \ .
\end{align}
We define 
\begin{align*}
    \gamma := \operatorname{max} \left\{ k, \; 
    1 + 
    \varepsilon^{-1} C_{f}^2 \frac{1}{|N_{\vec\tau}^{bl}|}, \; C_{f} C_{p} + \varepsilon C_{\vec \tau} C_{f}\left|\frac{M_{\vec\tau}^{1,bl}}{N_{\vec\tau}^{bl}}\right| 
    \right\}
\end{align*}
and obtain
\begin{align}
    |\mathcal{A}(\underline u,\underline w)| 
    &\leq  \gamma \left( \norm{\vec v_\FF^0}_{1,\FF} + \norm{ p_\PM}_{1,\PM}\right)
    \left( \norm{\vec w}_{1,\FF} + \norm{\psi}_{1,\PM}\right)
    \leq 2 \gamma \norm{\underline{u}}_W \norm{\underline w}_W,
    \label{eq:continuity}
\end{align}
where the second inequality in equation~\eqref{eq:continuity} follows from $(a+b) \leq \sqrt{2}(a^2+b^2)^{1/2}$ for all $a,b \in \mathbb{R}^+$. Thus,  the bilinear form $\mathcal{A}$ is continuous. 

Now we prove the coercivity of $\mathcal{A}$. We remember that $N_{\vec \tau}^{bl}<0$ and $M_{\vec \tau}^{1,bl}<0$ (see Sect.~\ref{sec:interface-conditions}). Making use of~\eqref{eq:kappa}, the trace inequality~\eqref{eq:trace-ff}, and \eqref{eq:duality}, we obtain
\begin{align}
    \mathcal{A}(\underline u,\underline u) =& 
    \int_{\Omega_\FF} \nabla \vec v_\FF^0 \colon \nabla \vec v_\FF^0
    + \int_{\Omega_\PM} (k \nabla p_\PM) \vdot  \nabla p_\PM   
    \underbrace{+\int_{\Gamma} p_\PM (\vec v_\FF^0 \vdot \vec n)
    - \int_{\Gamma} (\vec v_\FF^0 \vdot \vec n) p_\PM}_{=0}  
     \underbrace{- \int_{\Gamma}  {(N_{\vec\tau}^{bl})}^{-1} \varepsilon^{-1} \left( \vec v_\FF^0 \vdot \vec \tau \right)^2 }_{\geq 0} 
     \notag
    \\
    &
   \notag
    + \int_{\Gamma}  {(N_{\vec\tau}^{bl})}^{-1} \varepsilon \left( M_{\vec \tau}^{1,bl} \xpdrv{p_\PM}{x_1}  \right) \left(\vec v_\FF^0 \vdot \vec \tau \right) \notag 
    \notag
    \\
    \geq & 
    \norm{\nabla \vec v_\FF^0 }_{0,\FF}^2 
    + k \norm{\nabla p_\PM}_{0,\PM}^2 
    -  \varepsilon
    \frac{M_{\vec\tau}^{1,bl}}{N_{\vec\tau}^{bl}} \, \norm{ (\nabla p_\PM \vdot \vec \tau)|_\Gamma}_{(\Hzz(\Gamma))'} \, \norm{ (\vec v_\FF^0 \vdot \vec \tau)|_\Gamma }_{\Hzz(\Gamma)}  \notag 
    \\
    \geq  &
    {\kappa_\FF}^{-1} \norm{\vec v_\FF^0}_{1,\FF}^2
    + \kappa_\PM^{-1} k  \norm{ p_\PM}_{1,\PM}^2
     - \varepsilon C_{\vec \tau} C_{f} \frac{M_{\vec\tau}^{1,bl}}{N_{\vec\tau}^{bl}} \norm{ p_\PM}_{1,\PM} \, \norm{ \vec v_\FF^0}_{1,\FF}  \ .
    \label{eq:coercivity}
\end{align}
Applying the generalised Young's inequality $ab \leq a^2/(2\delta) + \delta b^2/2$ with  
$a=\varepsilon C_{\vec \tau} C_{f} {(N_{\vec \tau}^{bl})^{-1}} M_\tau^{1,bl}\norm{ p_\PM}_1 \geq 0$, $b= \norm{ \vec v_\FF^0}_1 \geq 0$ and $\delta > 0$ to the last term in~\eqref{eq:coercivity}, we get
\begin{align*} 
    \mathcal{A}(\underline u,\underline u) \geq \ &
    {\kappa_\FF}^{-1}
    \norm{\vec v_\FF^0}_{1,\FF}^2
    + \kappa_\PM^{-1} k  \norm{ p_\PM}_{1,\PM}^2
    - \frac{(\varepsilon C_{\vec \tau} C_{f} M_{\vec \tau}^{1,bl}{(N_{\vec \tau}^{bl})^{-1}})^2}{2\delta }\norm{ p_\PM}_{1,\PM}^2 - \frac{\delta }{2} \norm{ \vec v_\FF^0}_{1,\FF}^2 \notag
    \\
    =& \
    \left({\kappa_\FF}^{-1} 
    - \frac{\delta }{2} \right)\norm{\vec v_\FF^0}_{1,\FF}^2  \notag
    + \left( \kappa_\PM^{-1} k - \frac{(\varepsilon C_{\vec \tau} C_{f} M_{\vec \tau}^{1,bl}{(N_{\vec \tau}^{bl})^{-1}})^2}{2\delta} \right) \norm{ p_\PM}_{1,\PM}^2 \, .
\end{align*}
Recalling that the permeability can be written as $ k = \varepsilon^2 \tilde k $, we can conclude that the coercivity of the bilinear form $\mathcal{A}$ is guaranteed when the following conditions hold
\begin{align}
    \delta < 2{\kappa_\FF}^{-1} \, ,  \qquad 
     \kappa_\PM^{-1} \, \tilde k - \frac{(C_{\vec \tau} C_{f} M_{\vec \tau}^{1,bl}{(N_{\vec \tau}^{bl})^{-1}})^2}{2\delta}  > 0 \, . \label{eq:condition-proof}
\end{align}
Combining both inequalities in~\eqref{eq:condition-proof}, we obtain condition~\eqref{eq:coercivity-condition-result}. Therefore, the coercivity of the bilinear form $\mathcal{A}$ is guaranteed if condition~\eqref{eq:coercivity-condition-result} is fulfilled.
\end{proof}
The well-posedness of the Stokes--Darcy problem  with generalised coupling conditions is established under assumption~\eqref{eq:coercivity-condition-result} on the permeability being not too small in comparison with the ratio of boundary layer constants $(M_{\vec \tau}^{1,bl} / N_{\vec \tau}^{bl})^2 $. 
The permeability $\tilde k$, the scale separation parameter $\varepsilon$ and the boundary layer constants $N_{\vec \tau}^{bl}$ and $M_{\vec \tau}^{1,bl}$ can be computed numerically based on the pore-scale information of the coupled system.
The constants $\kappa_\PM$ and $\kappa_\FF$  appearing in condition~\eqref{eq:coercivity-condition-result} are related to the  Poincar\'{e} constants from~\eqref{eq:Poincare} and \eqref{eq:kappa}. These constants depend on the size and geometry of the coupled domain~\cite{Brezis,Nazarov_15}.  
To the best of the authors' knowledge, there exist no estimates for constants $C_{\vec \tau}$ and $C_{f}$ coming from the trace inequalities \eqref{eq:trace-ff} and \eqref{eq:inequality-tangential}. 


\section{Numerical results}
\label{sec:numerics}
In this section, we first analyse the validity of condition~\eqref{eq:coercivity-condition-result} considering different porous-medium configurations. 
Then, we present a benchmark for the numerical solution of the Stokes--Darcy problem with  generalised interface conditions~\eqref{eq:IC-mass}--\eqref{eq:IC-tangential} and provide the procedure for the computation of  the material parameters.

\subsection{Analysis of the  relationship between permeability and boundary layer constants}\label{sec:validity-range}

We analyse the validity of the  relationship~\eqref{eq:coercivity-condition-result} between the permeability and the boundary layer constants for several  geometrical configurations of the coupled free-flow and porous-medium systems. We consider porous media with different shapes (circle, square, rhombus) and different sizes of solid inclusions (Tab.~\ref{tab:geoms-only}). 
Periodic porous media (Fig.~\ref{fig:domain}, right) are constructed by the periodic repetition of the scaled unit cell $\varepsilon Y=(0,\varepsilon) \times (0,\varepsilon)$. 
Note that the material parameters (permeability $\tilde k$, boundary layer constants $N_{\vec \tau}^{bl}$, $M_{\vec \tau}^{1,bl}$) depend on the shape and size of solid obstacles, but are independent on the scale separation parameter $\varepsilon$. Therefore, to analyse the validity of condition~\eqref{eq:coercivity-condition-result} for different geometrical configurations, we consider the corresponding unit cell $Y=(0,1) \times (0,1)$ and the boundary layer stripe to compute the permeability $\tilde k$ and the boundary layer constants $N_{\vec \tau}^{bl}$ and $M_{\vec \tau}^{1,bl}$, respectively (Tab.~\ref{tab:geoms-only} and Fig.~\ref{fig:BLconstants}). 

\begin{table}[h!]
    \caption{Non-dimensional permeability $\tilde k$ and squared ratio $R^2$ for different porous-medium configurations.}
    \vspace{2mm}
    \centering
    \includegraphics[scale=1]{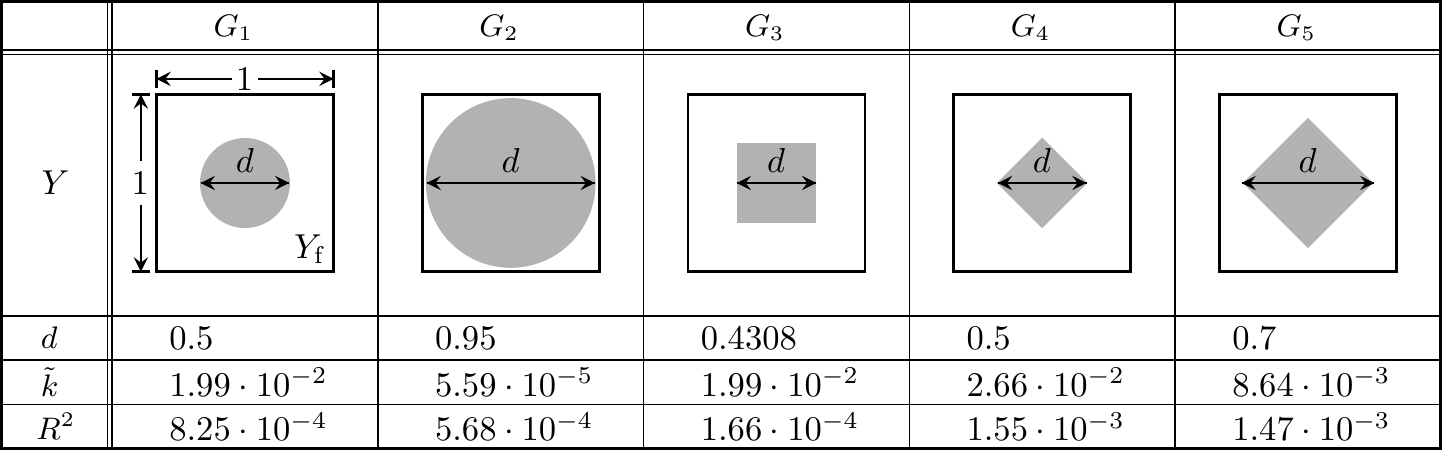}
    \label{tab:geoms-only}
\end{table}

For the sake of clarity, we reformulate condition~\eqref{eq:coercivity-condition-result} as follows
\begin{align}
    \tilde k  >  C  R^2,
    \label{eq:condition-validation}
\end{align}
where  $C:=\kappa_\PM \kappa_\FF (C_{\vec \tau} C_{f} )^2$ and $R:=M_{\vec \tau}^{1,bl}/(2 N_{\vec \tau}^{bl})$.  We can compute the permeability $\tilde k$ based on the pore geometry by means of homogenisation theory \cite{Hornung_97} using formulas~\eqref{eq:cell-problems} and \eqref{eq:permeability} from Section~\ref{sec:computation-of-BL}. 
The permeability values for the five considered geometries are presented in Table~\ref{tab:geoms-only}. 
The boundary layer constants $N_{\vec \tau}^{bl}$ and $M_{\vec \tau}^{1,bl}$ are computed by solving the boundary layer problems~\eqref{eq:BL-beta},~\eqref{eq:BL-2} and by integrating their solutions using  formulas~\eqref{eq:BL-beta-constant} and \eqref{eq:BL-t-constant}. 
The boundary layer problems~\eqref{eq:BL-beta} and \eqref{eq:BL-2} are solved numerically within a cut-off boundary layer stripe $Z^{bl}_l$, containing one column with $l=4$ solid inclusions. A schematic representation of the cut-off boundary layer stripe with $l=2$ circular solid obstacles corresponding to geometry $G_1$ is presented in Figure~\ref{fig:BLconstants} (left).
The vertical position of the interface  within this boundary layer stripe (Fig.~\ref{fig:BLconstants}, left and right) can be varied in a certain range depending on the pore geometry~\cite{Eggenweiler-MMS, Jaeger_Mikelic_00}. 
A change of the interface location within the stripe leads to the corresponding change of the boundary layer constants, which contain the information about the exact position of the interface. Note that the interface location does not influence the permeability values.
The constant $C$,  which contains the constants $C_{\vec \tau}$ and $C_{f}$ coming from the trace inequalities \eqref{eq:trace-ff} and \eqref{eq:inequality-tangential},  cannot be estimated to the best of the authors' knowledge. However, we show that there is a non-trivial validity range for the constant $C$ such that condition \eqref{eq:condition-validation} (or, equivalently, \eqref{eq:coercivity-condition-result}) is fulfilled.
To achieve this, we minimise the ratio $R$ in condition~\eqref{eq:condition-validation} by determining the optimal location of the interface.

\begin{figure}[h!]
    \centering
    \includegraphics[scale=0.8]{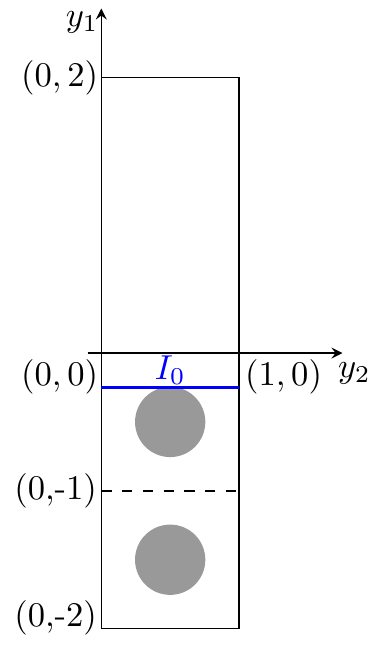}\qquad 
    \includegraphics[scale=0.92]{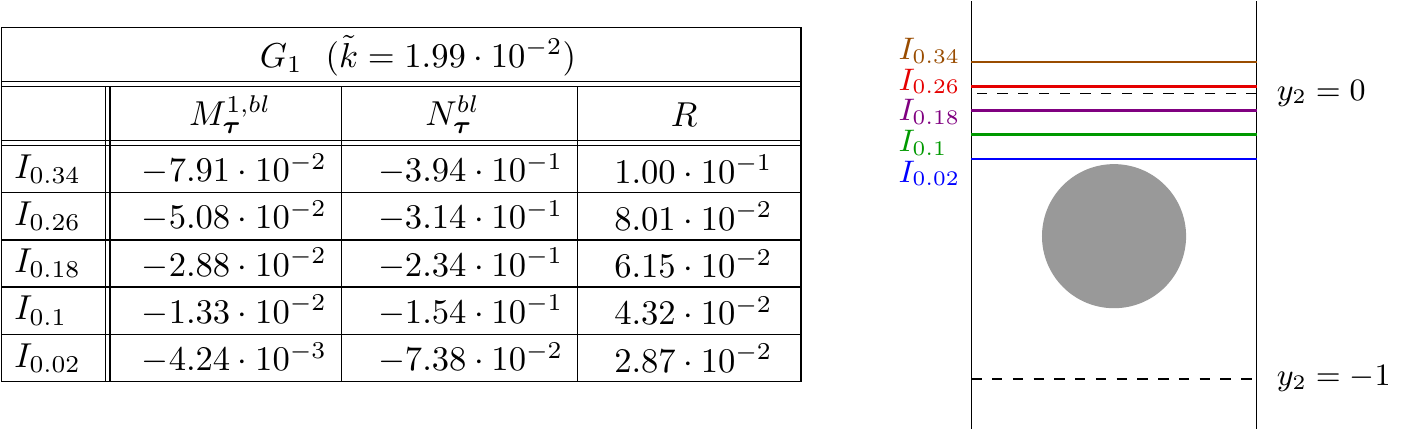}
    \caption{Boundary layer stripe $Z^{bl}_2$ for geometry $G_1$ (left), boundary layer constants and ratio $R$ for five interface locations $I_a$ (middle) and illustration of these interface locations (right).}
    \label{fig:BLconstants}
\end{figure}

We consider different vertical positions of the interface $I_a:=(0,1)\times \{a+\frac{d-1}{2}\}$, where $a\geq0$ denotes the distance between the interface $I_a$ and the top of the first row of solid inclusions (Fig.~\ref{fig:BLconstants}, right). 
In Figure~\ref{fig:BLconstants} (middle), we provide the values of the boundary layer constants $N_{\vec \tau}^{bl}$, $M_{\vec \tau}^{1,bl}$ and the ratio $R$ for five different interface positions for the geometry $G_1$. We observe that the ratio $R$ monotonically decreases as the interface location approaches the top of the solid inclusions, i.e., $a \to 0$. This correlation between $R$ and $I_a$ holds for other geometrical configurations as well. In Figure~\ref{fig:ratio} (left), we plot the ratio $R$ for the porous-medium geometries shown in Table~\ref{tab:geoms-only} versus the distance $a$, where 30 equidistantly distributed locations $I_a$  have been considered for the geometries $G_1$ and $G_3$ to $G_5$. A smaller range for $a$ has been used for geometry $G_2$ due to the bigger diameter of the solid grain. On the basis of these results, we recommend locating the interface as close as possible to the top of solid inclusions. This finding is in agreement with the experiment of Beavers and Joseph for parallel flows to the porous layer~\cite{Beavers_Joseph_67} and with the conclusion from~\cite{Rybak_etal_19}, where different flow scenarios were studied.

We compute the boundary layer constants $N_{\vec \tau}^{bl}$ and $M_{\vec \tau}^{1,bl}$ using \textsc{FreeFem++}~\cite{Hecht_12}. 
Restrictions on the meshing in \textsc{FreeFEM++} do not allow us to locate the interface directly on the top of the first row of solid inclusions. Therefore, we locate it at the next possible level, which is at distance $a=0.02$. We compute the boundary layer constants for the porous-medium configurations $G_1$ to $G_5$ presented in Table~\ref{tab:geoms-only},  considering the interface location $I_{0.02}$. The computed values are reported in Figure~\ref{fig:ratio} (right), where we also provide the ratio $R$ appearing in condition~\eqref{eq:condition-validation}. 

\begin{figure}[ht]
    \centering
    \includegraphics[scale=0.85]{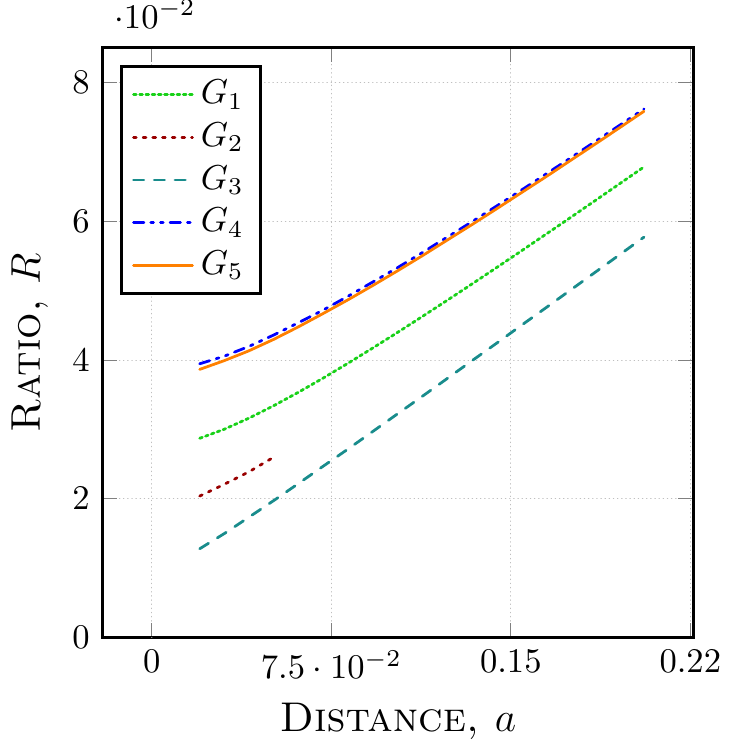} \qquad \qquad
    \includegraphics[scale=1.]{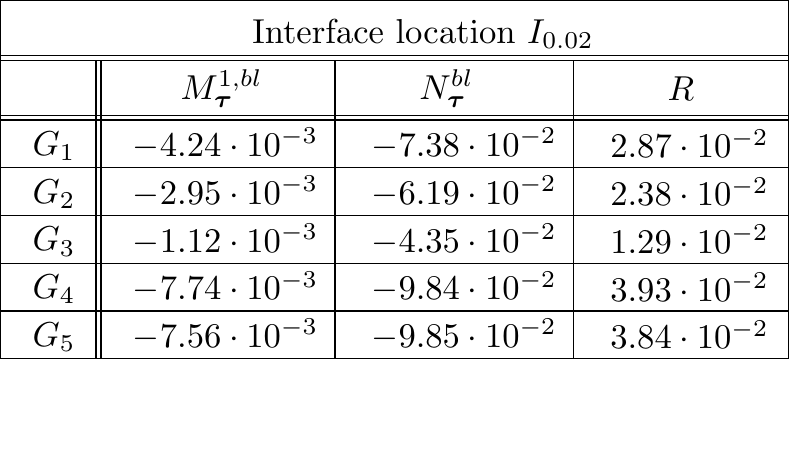}
    \caption{Ratio $R$ for geometries $G_1$ to $G_5$ computed for different interface locations (left). Boundary layer constants $N_{\vec \tau}^{bl}$, $M_{\vec \tau}^{1,bl}$  and ratio $R$ computed for the optimal location $I_{0.02}$ (right).}
    \label{fig:ratio}
\end{figure}

For the ease of analysis of condition~\eqref{eq:condition-validation}, we provide in Table~\ref{tab:geoms-only} the squared ratio $R^2$ for the optimal interface location $I_{0.02}$ next to the non-dimensional permeability $\tilde k$. For the porous-medium geometries $G_1$ and $G_3$, we have $\tilde k \gg R^2$. In this case, the constant $C$ in condition   \eqref{eq:condition-validation}  can be of order $10^2$, that is not restrictive. For the geometrical configurations $G_4$ and $G_5$, we get $\tilde k > R^2$, where the constant $C$ can be at most of order $10$. This a very mild restriction. However, for the geometry $G_2$ we obtain $\tilde k < R^2$, that requires $C < 1$, making condition~\eqref{eq:coercivity-condition-result} a stronger constraint.

\subsection{Numerical benchmark }\label{sec:numerical-benchmark}

This section serves as a benchmark for other researchers, who would like to use the generalised interface conditions~\eqref{eq:IC-mass}--\eqref{eq:IC-tangential} for numerical simulations. First, we provide the details on the computation of material parameters (permeability $\tilde k $, boundary layer constants $N_{\vec \tau}^{bl}$ and  $M_{\vec \tau}^{1,bl}$). Then, we present an analytical solution to the Stokes--Darcy problem, which satisfies the generalised interface conditions~\eqref{eq:IC-mass}--\eqref{eq:IC-tangential} at the fluid--porous interface.

\subsubsection{Computation of material parameters}\label{sec:computation-of-BL}

To compute the permeability $\tilde k$, we apply the theory of homogenisation~\cite{Hornung_97}. Since we consider isotropic porous media, it is sufficient to solve only one Stokes problem in the fluid part $Y_\text{f}$ of the unit cell $Y=(0,1)\times (0,1)$:
\begin{equation}\label{eq:cell-problems}
\begin{split}
- \Delta_ {\vec{y}} \vec{w}  &+ \nabla_{\vec{y}} \pi = 
\vec{e}_1, \quad
\operatorname{div}_{\vec{y}} \vec{w}  =0 \quad \text{in 
$Y_\text{f}$}, \quad \int_{Y_\text{f}} \pi \ \text{d} \vec{y} =0,
\\
\vec{w} =  \vec{0} & \quad  \text{on $\partial Y_\text{f} \setminus \partial Y$},  \quad
\{ \vec{w}, \pi\} \text{ is 1-periodic} \text{ in } \vec{y}, \quad \vec y = \frac{\vec x}{\varepsilon} \, ,
\end{split}
\end{equation}
where $\vec{w} = (w_1,w_2)$ and $\pi$ are the solutions to the Stokes problem  \eqref{eq:cell-problems} and $\vec e_1 =(1,0)$.
The non-dimensional permeability $\tilde k$ is then given by
\begin{equation}\label{eq:permeability}
\tilde k = \int_{Y_\text{f}} 
w_{1}(\vec y) \ \text{d}  \vec{y} \, .
\end{equation}
We solve the cell problem~\eqref{eq:cell-problems} using \textsc{FreeFem++} with Taylor--Hood finite elements. E.g., for the geometry $G_1$ presented in Table~\ref{tab:geoms-only} an adaptive mesh with approx. 50~000 elements is used.

To compute the boundary layer constants $N_{\vec \tau}^{bl}$ and $M_{\vec \tau}^{1,bl}$, the homogenisation theory with boundary layers~\cite{Carraro_etal_15,Eggenweiler-MMS,Jaeger_Mikelic_00} is applied. We consider the cut-off boundary layer stripe $Z^{bl}_4$ (see Sect.~\ref{sec:validity-range}) and define $Z^+_4= (0,1)\times(a+\frac{d-1}{2},4)$ and $Z^-_4=Z^{bl}_4 \setminus Z^+_4$. 
To obtain the boundary layer constant $M_{\vec \tau}^{1,bl} $, the following boundary layer problem is solved
\begin{equation}\label{eq:BL-beta}
\begin{split}
-\Delta_{\vec y} \vec \beta^{1,bl} + \nabla_{\vec y} \omega^{1,bl} = \vec0,  \quad 
\operatorname{div}_{\vec y} \vec \beta^{1,bl} &= 0 \quad \text{in } Z^{+}_4  \cup Z^{-}_4,
\\[1ex]
\big\llbracket \vec \beta^{1,bl} \big\rrbracket_{I_a} = - \vec w, \quad 
\big\llbracket   ( \nabla_{\vec y} \vec \beta^{1,bl} - \omega^{1,bl} \ten{I} ) \vec{e}_2\big\rrbracket_{I_a} &= - \left( \nabla_{\vec y} \vec w - \pi \ten{I} \right) \vec{e}_2 \quad \text{on } I_a,
\\[1ex]
\vec \beta^{1,bl} = \vec 0 \quad \text{on } \{ y_2 =-4 \}, 
\quad
\beta^{1,bl}_2 =   \frac{\partial\beta^{1,bl}_1}{\partial y_2} =   0 \quad \text{on } \{ y_2 =4 \}, &
\quad
\vec \beta^{1,bl} = \vec 0 \quad \text{on }  \cup_{k=1}^{4}(\partial Y_s - (0,k)) \,  ,
\end{split}
\end{equation}
where $\vec \beta^{1,bl}$ and  $\omega^{1,bl}$ are $y_1$-periodic functions and $\llbracket f\rrbracket_{I_a} := f(\cdot, +(a+\tfrac{d-1}{2}))-f(\cdot,-(a+\tfrac{d-1}{2}))$. To define $\omega^{1, bl}$ uniquely, we set $\displaystyle \int_0^1 \omega^{1, bl} (y_1, -4) \ \text{d} y_1=0$.
The boundary layer constant $M_{\vec \tau}^{1,bl}$ is then given by 
\begin{align}\label{eq:BL-beta-constant}
    M_{\vec \tau}^{1,bl} = \int_0^1 \beta_1^{1, bl}\left(y_1,a+\tfrac{d-1}{2}\right)\ \text{d} y_1 \, .
\end{align}
To compute the boundary layer constant $N_{\vec \tau}^{bl}$, we solve the following boundary layer problem
\begin{equation}\label{eq:BL-2}
\begin{split}
-\Delta_{\vec y} \vec t^{bl} + \nabla_{\vec y} s^{bl} = \vec0, \quad
\
\operatorname{div}_{\vec y} \vec t^{bl} &= 0 \quad \text{in } Z^{+}_4  \cup Z^{-}_4,  
\\[1ex]
\big\llbracket \vec t^{bl} \big\rrbracket_{I_a} = \vec 0, \quad 
\
\big\llbracket ( \nabla_{\vec y} \vec t^{bl} - s^{bl} \ten{I} ) & \vec{e}_2\big\rrbracket_{I_a} =  \vec{e}_1 \hspace{1.5ex} \text{on } I_a, 
\\[1ex]
\vec t^{bl} = \vec 0 \quad \text{on } \{ y_2 =-4 \}, 
\quad
t^{bl}_2 =   \frac{\partial t^{bl}_1}{\partial y_2} = 0 \quad \text{on } \{ y_2 =4 \},
\quad &
\vec t^{bl} = \vec 0 \quad \text{on } \cup_{k=1}^{4}(\partial Y_s - (0,k)) \, .
\end{split}
\end{equation}
Here, $\vec t^{bl}$ and  $s^{bl}$ are $y_1$-periodic. Again, for the uniqueness, we impose $\displaystyle \int_0^1  s^{bl}(y_1,-4) \ \text{d} y_1=0 $ and compute
\begin{align}
    N_{\vec \tau}^{bl} = \int_0^1  t_1^{bl} \left(y_1,a+\tfrac{d-1}{2}\right) \ \text{d} y_1 \, .
    \label{eq:BL-t-constant}
\end{align}
We solve the boundary layer problems~\eqref{eq:BL-beta} and~\eqref{eq:BL-2} numerically using \textsc{FreeFem++} with Taylor--Hood finite elements. E.g., for the boundary layer stripe with four circular inclusions $G_1$ an adaptive mesh with approx.~300~000 elements is used and the boundary layer constants are presented in~Figure~\ref{fig:BLconstants} (middle).

\subsubsection{Benchmark solution of the Stokes--Darcy problem}\label{sec:numerical-solution}

In this section, we provide an analytical solution for the Stokes--Darcy problem \eqref{eq:1p1c-FF-mass}--\eqref{eq:PM-BC} that  satisfies the generalised interface conditions~\eqref{eq:IC-mass}--\eqref{eq:IC-tangential}. This solution can serve as a benchmark for the researchers who will develop and investigate efficient numerical methods for such coupled problem. We consider the coupled domain $\Omega=(0,1)\times (0,1)$ with the interface $\Gamma =(0,1)\times \{0.5\}$ and choose the following exact solution 
\begin{equation}
\begin{split}
u_\FF  &= \sin\left(\frac{\pi x_1}2\right) \cos\left(\frac{\pi x_2}2\right)\, , \hspace{8.5ex}
p_\FF = \frac{\sqrt{2}}{2}\cos\left(\frac{\pi x_1}2\right) \left(\frac{e^{x_2-0.5}}{k}  - \frac{\pi}{2}\right) \, ,
\\
v_\FF  &=  -\cos\left(\frac{\pi x_1}2\right) \sin\left(\frac{\pi x_2}2\right) \, ,
 \hspace{5ex}
p_\PM = \frac{\sqrt{2}}{2} \cos\left(\frac{\pi x_1}2\right)\frac{e^{x_2-0.5}}{k} \, ,
\end{split}
\label{eq:exact-solution}
\end{equation}
where the free-flow velocity is $\vec v_\FF = (u_\FF, v_\FF)$.

\begin{table}[b]
    \caption{Relative errors for different grid sizes $h$. }
    \begin{center}
    \begin{tabular}{ |c||c|c|c|c| } 
        \hline 
        $h$ & \hspace{5.5ex}  $\epsilon_{u_\FF}$ \hspace{5.5ex}  & \hspace{5.5ex} $\epsilon_{v_\FF}$ \hspace{5.5ex} & \hspace{5.5ex} $\epsilon_{p_\FF}$ \hspace{5.5ex} & \hspace{5.5ex} $\epsilon_{p_\PM}$ \hspace{5.5ex} \\
        \hhline{|=||=|=|=|=|}
        $1/8$     & $5.11\mathrm{e}{+00}$    & $1.35\mathrm{e}{+00}$ &  $2.91\mathrm{e}{-03}$ & $2.29\mathrm{e}{-03}$ \\ \hline
        $1/16$    & $1.13\mathrm{e}{+00}$    & $2.81\mathrm{e}{-01}$ &   $7.66\mathrm{e}{-04}$ & $5.98\mathrm{e}{-04}$ \\ \hline
        $1/32$    & $2.73\mathrm{e}{-01}$    & $6.68\mathrm{e}{-02}$ &   $1.98\mathrm{e}{-04}$ & $1.54\mathrm{e}{-04}$ \\ \hline
        $1/64$    & $6.76\mathrm{e}{-02}$    & $1.64\mathrm{e}{-02}$ &   $5.09\mathrm{e}{-05}$ & $3.91\mathrm{e}{-05}$ \\ \hline
        $1/128$   & $1.68\mathrm{e}{-02}$    & $4.09\mathrm{e}{-03}$ &   $1.29\mathrm{e}{-05}$ & $1.00\mathrm{e}{-05}$ \\ \hline
        $1/256$   & $4.21\mathrm{e}{-03}$    & $1.02\mathrm{e}{-03}$ &   $3.28\mathrm{e}{-06}$ & $2.52\mathrm{e}{-06}$ \\ \hline
        $1/512$   & $1.05\mathrm{e}{-03}$    & $2.54\mathrm{e}{-04}$ &  $8.22\mathrm{e}{-07}$ & $6.33\mathrm{e}{-07}$ \\ \hline
        $1/1024$  & $2.63\mathrm{e}{-04}$    & $6.39\mathrm{e}{-05}$ &   $2.05\mathrm{e}{-07}$ & $1.59\mathrm{e}{-07}$ \\
        \hline
    \end{tabular} \label{tab:convergence}
    \end{center}
\end{table}

We choose the realistic value of permeability $k=10^{-6}$ and consider $\varepsilon = 10^{-1}$. We set the force terms in the right-hand sides of the Stokes and Darcy equations as well as the Dirichlet boundary conditions by substitution of the exact solution \eqref{eq:exact-solution} and the permeability value into the model formulation~\eqref{eq:1p1c-FF-mass}--\eqref{eq:PM-BC}. The exact solution~\eqref{eq:exact-solution} satisfies the generalised  interface conditions~\eqref{eq:IC-mass}--\eqref{eq:IC-tangential} taking the boundary layer constants $N_{\vec \tau}^{bl}= -\tfrac{1}{\pi}\approx - 0.3183$ and $M_{\vec \tau}^{1,bl}= -\tfrac{2k(1+0.5\varepsilon)}{\pi\varepsilon^2}\approx - 0.06398$. We note that these values lie in a typical range of the boundary layer constants for realistic geometries (Fig.~\ref{fig:BLconstants}, middle and Fig.~\ref{fig:ratio}, right).
Remember that $M_{\vec \tau}^{2,bl}=0$ and $N_s^{bl}=0$ due to isotropy of the porous medium.

We solve the coupled problem~\eqref{eq:1p1c-FF-mass}--\eqref{eq:IC-tangential} numerically using the second-order finite volume method on staggered grids. In both flow domains, we consider uniform rectangular meshes conforming at the fluid--porous interface~$\Gamma$. Since the chosen discretisation scheme is of second order, we decrease the grid step by the factor of two at each level of refinement starting with $h=1/8$. We perform seven levels of grid refinement and compute the relative $\xLn{2}$-errors  for all primary variables
\begin{align*}
\epsilon_f = \frac{\| f-f_h\|_{0,i}}{\|f\|_{0,i}}, \qquad f\in\{u_\FF, \ v_\FF, \ p_\FF, \ p_\PM\},
\end{align*}
where $f_h$ is the numerical solution and $i\in\{\FF,\PM\}$ depending on the primary variable $f$. The numerical simulation results are presented in Table~\ref{tab:convergence} and Figure~\ref{fig:convergence-model}, that demonstrates the second order convergence of the discretisation scheme. 

\enlargethispage{0.75cm}

\begin{figure}[h!] 
    \centering
    \includegraphics[scale=0.8]{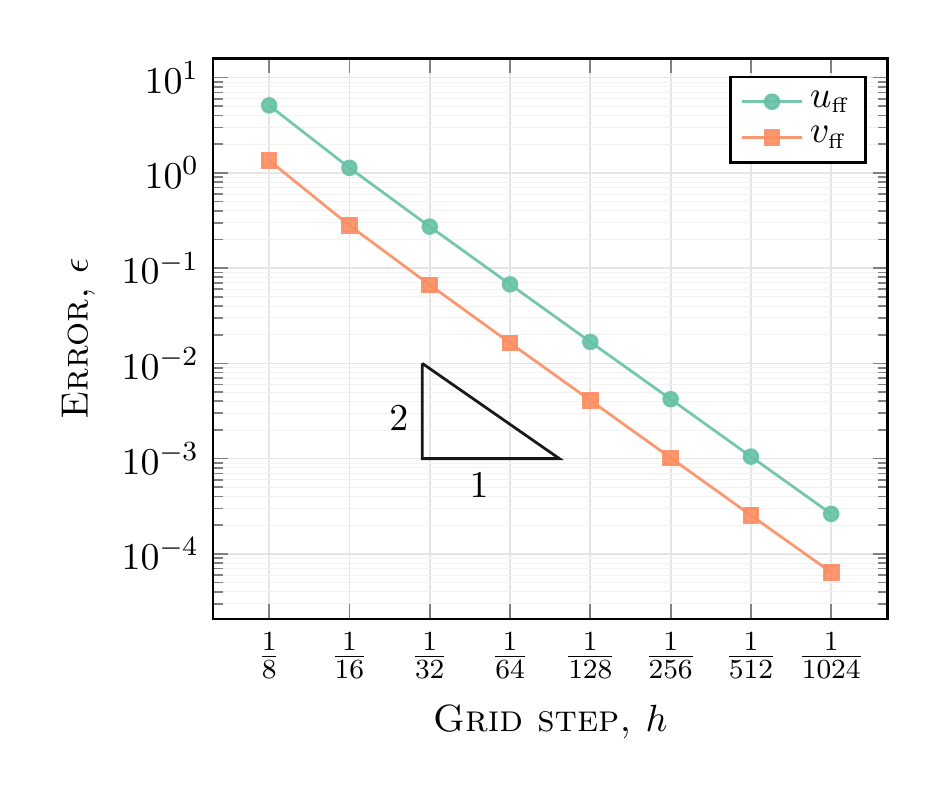} \qquad
    \includegraphics[scale=0.8]{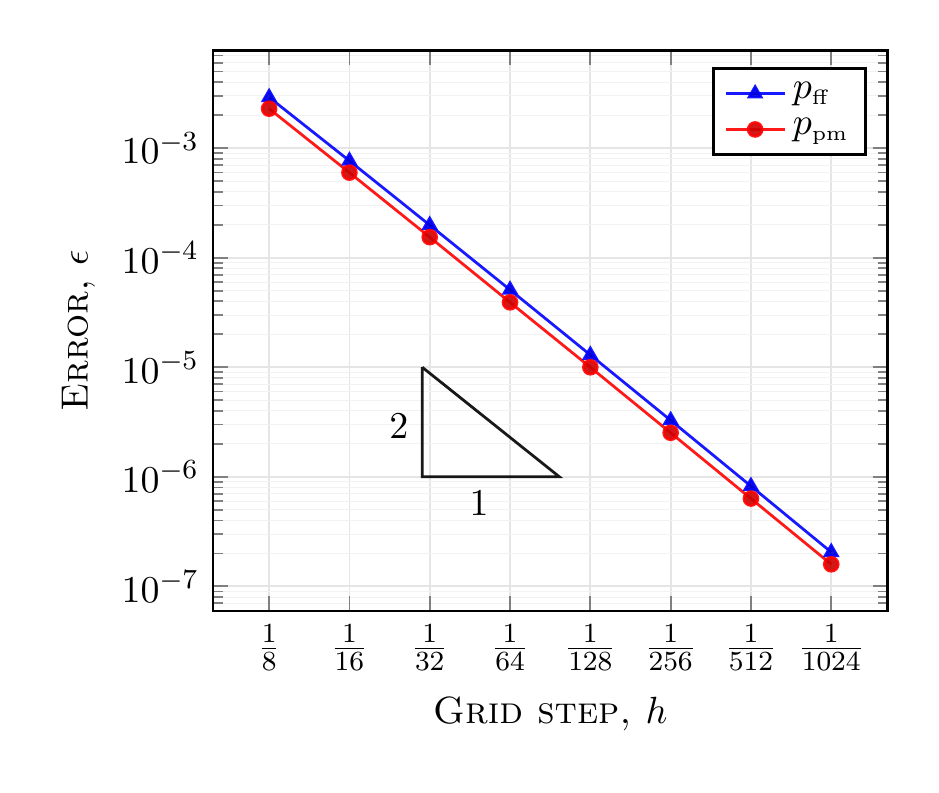}
    \caption{Error analysis for all primary variables.}
    \label{fig:convergence-model}
\end{figure}

\section{Summary}\label{sec:summary}

In this paper, we have analysed the Stokes--Darcy problem with generalised coupling conditions at the interface between the fluid and the porous-medium domains. These conditions extend the classical coupling conditions based on the Beavers--Joseph condition to the case of arbitrary flows non-parallel to the interface. We have proved that the resulting coupled problem is well-posed under the hypothesis that the permeability is large enough compared to the boundary layer constants that take into account the geometrical properties of the porous medium in the interfacial region.

Numerical tests have been used to identify the optimal position of the interface in order to guarantee that the constraint on the permeability becomes the least restrictive possible for a range of porous-medium geometries. Finally, we have given practical indications on how to compute the boundary layer constants, and we have provided a benchmark test case for the Stokes--Darcy problem with the generalised coupling conditions.

Future extensions of this work will focus on the development and analysis of effective decoupling algorithms to solve the Stokes--Darcy problem with the generalised interface conditions.

\section*{Acknowledgments}
The work is funded by the Deutsche Forschungsgemeinschaft (DFG, German Research Foundation) -- Project Number 327154368 -- SFB 1313.\\
The authors thank Profs. Ana Alonso and Alberto Valli for the helpful discussions on trace inequalities in $\mathrm{H}(\mbox{curl};\Omega)$ spaces.

\bibliographystyle{abbrv}
\bibliography{EDR-arxiv.bib}

\begin{thebibliography}{10}

\bibitem{Angot_18}
P.~Angot.
\newblock Well-posed {S}tokes/{B}rinkman and {S}tokes/{D}arcy coupling
  revisited with new jump interface conditions.
\newblock {\em ESAIM. Math. Model. Numer. Anal.}, 52:1875--1911, 2018.

\bibitem{Angot_etal_17}
P.~Angot, B.~Goyeau, and J.~A. Ochoa-Tapia.
\newblock Asymptotic modeling of transport phenomena at the interface between a
  fluid and a porous layer: jump conditions.
\newblock {\em Phys. Rev. E}, 95:063302, 2017.

\bibitem{Beaude_etal_19}
L.~Beaude, K.~Brenner, S.~Lopez, R.~Masson, and F.~Smai.
\newblock Non-isothermal compositional liquid gas {D}arcy flow: formulation,
  soil-atmosphere boundary condition and application to high-energy geothermal
  simulations.
\newblock {\em Comput. Geosci.}, 23:443--470, 2019.

\bibitem{Beavers_Joseph_67}
G.~S. Beavers and D.~D. Joseph.
\newblock Boundary conditions at a naturally permeable wall.
\newblock {\em J. Fluid Mech.}, 30:197--207, 1967.

\bibitem{Brezis}
H.~Brezis.
\newblock {\em Functional Analysis, Sobolev Spaces and Partial Differential
  Equations}.
\newblock Springer, 2011.

\bibitem{Brezzi:1974:OEU}
F.~Brezzi.
\newblock On the existence, uniqueness and approximation of saddle-point
  problems arising from {L}agrange multipliers.
\newblock {\em Rev. Fran\c caise Automat. Informat. Recherche Op\'erationnelle,
  s\'er. Rouge}, 8:129--151, 1974.

\bibitem{Cao_10}
Y.~Cao, M.~Gunzburger, F.~Hua, and X.~Wang.
\newblock Coupled {S}tokes--{D}arcy model with {B}eavers--{J}oseph interface
  boundary condition.
\newblock {\em Commun. Math. Sci.}, 8:1--25, 2010.

\bibitem{Carraro_etal_15}
T.~Carraro, C.~Goll, A.~Marciniak-Czochra, and A.~Mikeli\'{c}.
\newblock Effective interface conditions for the forced infiltration of a
  viscous fluid into a porous medium using homogenization.
\newblock {\em Comput. Methods Appl. Mech. Engrg.}, 292:195--220, 2015.

\bibitem{Dautray_90}
R.~Dautray and J.~Lions.
\newblock {\em Mathematical Analysis and Numerical Methods for Science and
  Technology}.
\newblock Springer, 1990.

\bibitem{Dawson_08}
C.~Dawson.
\newblock A continuous/discontinuous {G}alerkin framework for modeling coupled
  subsurface and surface water flow.
\newblock {\em Comput. Geosci.}, 12:451--472, 2008.

\bibitem{Discacciati_Miglio_Quarteroni_02}
M.~Discacciati, E.~Miglio, and A.~Quarteroni.
\newblock Mathematical and numerical models for coupling surface and
  groundwater flows.
\newblock {\em Appl. Num. Math.}, 43:57--74, 2002.

\bibitem{Discacciati_Quarteroni_09}
M.~Discacciati and A.~Quarteroni.
\newblock Navier--{S}tokes/{D}arcy coupling: modeling, analysis, and numerical
  approximation.
\newblock {\em Rev. Mat. Complut.}, 22:315--426, 2009.

\bibitem{Eggenweiler_Rybak_20}
E.~Eggenweiler and I.~Rybak.
\newblock Unsuitability of the {B}eavers--{J}oseph interface condition for
  filtration problems.
\newblock {\em J. Fluid Mech.}, 892:A10, 2020.

\bibitem{Eggenweiler-MMS}
E.~Eggenweiler and I.~Rybak.
\newblock Effective coupling conditions for arbitrary flows in {S}tokes-{D}arcy
  systems.
\newblock {\em Multiscale Model. Simul. (in press)}, 2021.
\newblock https://arxiv.org/abs/2006.12096v1.

\bibitem{GiraultRiviere}
V.~Girault and B.~Rivi\`{e}re.
\newblock {DG} approximation of coupled {N}avier--{S}tokes and {D}arcy
  equations by {B}eavers--{J}oseph--{S}affman interface condition.
\newblock {\em SIAM J. Numer. Anal.}, 47:2052--2089, 2009.

\bibitem{Goyeau_Lhuillier_etal_03}
B.~Goyeau, D.~Lhuillier, D.~Gobin, and M.~Velarde.
\newblock Momentum transport at a fluid-porous interface.
\newblock {\em Int. J. Heat Mass Transfer}, 46:4071--4081, 2003.

\bibitem{Hanspal_etal_09}
N.~Hanspal, A.~Waghode, V.~Nassehi, and R.~Wakeman.
\newblock Development of a predictive mathematical model for coupled
  {S}tokes/{D}arcy flows in cross-flow membrane filtration.
\newblock {\em Chem. Eng. J.}, 149:132--142, 2009.

\bibitem{Hecht_12}
F.~Hecht.
\newblock New development in \textsc{{F}ree{F}em++}.
\newblock {\em J. Numer. Math.}, 20:251--265, 2012.

\bibitem{Hornung_97}
U.~Hornung.
\newblock {\em Homogenization and Porous Media}.
\newblock Springer, 1997.

\bibitem{Hou_Qin_19}
Y.~Hou and Y.~Qin.
\newblock On the solution of coupled {S}tokes/{D}arcy model with
  {B}eavers--{J}oseph interface condition.
\newblock {\em Comput. Math. Appl.}, 77:50--65, 2019.

\bibitem{Jaeger_Mikelic_00}
W.~J\"{a}ger and A.~Mikeli\'{c}.
\newblock On the interface boundary conditions by {B}eavers, {J}oseph and
  {S}affman.
\newblock {\em SIAM J. Appl. Math.}, 60:1111--1127, 2000.

\bibitem{Jaeger_Mikelic_09}
W.~J\"{a}ger and A.~Mikeli\'{c}.
\newblock Modeling effective interface laws for transport phenomena between an
  unconfined fluid and a porous medium using homogenization.
\newblock {\em Transp. Porous Media}, 78:489--508, 2009.

\bibitem{Jarauta_etal_20}
A.~Jarauta, V.~Zingan, P.~Minev, and M.~Secanell.
\newblock A compressible fluid flow model coupling channel and porous media
  flows and its application to fuel cell materials.
\newblock {\em Transp. Porous Med.}, 134:351--386, 2020.

\bibitem{Kanschat_Riviere_10}
G.~Kanschat and B.~Rivi\`{e}re.
\newblock A strongly conservative finite element method for the coupling of
  {S}tokes and {D}arcy flow.
\newblock {\em J. Comput. Phys.}, 229:5933--5943, 2010.

\bibitem{Lacis_Bagheri_17}
U.~L\={a}cis and S.~Bagheri.
\newblock A framework for computing effective boundary conditions at the
  interface between free fluid and a porous medium.
\newblock {\em J. Fluid Mech.}, 812:866--889, 2017.

\bibitem{Lacis_etal_20}
U.~L\={a}cis, Y.~Sudhakar, S.~Pasche, and S.~Bagheri.
\newblock Transfer of mass and momentum at rough and porous surfaces.
\newblock {\em J. Fluid Mech.}, 884:A21, 2020.

\bibitem{Layton_Schieweck_Yotov_03}
W.~Layton, F.~Schieweck, and I.~Yotov.
\newblock Coupling fluid flow with porous media flow.
\newblock {\em SIAM J. Numer. Anal.}, 40:2195--2218, 2003.

\bibitem{Bars_Worster_06}
M.~Le~Bars and M.~Worster.
\newblock Interfacial conditions between a pure fluid and a porous medium:
  implications for binary alloy solidification.
\newblock {\em J. Fluid Mech.}, 550:149--173, 2006.

\bibitem{Lions_68}
J.~Lions and E.~Magenes.
\newblock {\em Non-Homogeneous Boundary Problemes and Applications}.
\newblock Springer, 1972.

\bibitem{Maxwell2014}
R.~M. Maxwell, M.~Putti, S.~Meyerhoff, J.-O. Delfs, I.~M. Ferguson, V.~Ivanov,
  J.~Kim, O.~Kolditz, S.~J. Kollet, M.~Kumar, S.~Lopez, J.~Niu, C.~Paniconi,
  Y.-J. Park, M.~S. Phanikumar, C.~Shen, E.~A. Sudicky, and M.~Sulis.
\newblock Surface-subsurface model intercomparison: {A} first set of benchmark
  results to diagnose integrated hydrology and feedbacks.
\newblock {\em Water Resour. Res.}, 50:1531--1549, 2014.

\bibitem{Mosthaf_Baber_etal_11}
K.~Mosthaf, K.~Baber, B.~Flemisch, R.~Helmig, A.~Leijnse, I.~Rybak, and
  B.~Wohlmuth.
\newblock A coupling concept for two-phase compositional porous-medium and
  single-phase compositional free flow.
\newblock {\em Water Resour. Res.}, 47:W10522, 2011.

\bibitem{Nazarov_15}
A.~I. Nazarov and S.~I. Repin.
\newblock Exact constants in {P}oincar\'{e} type inequalities for functions
  with zero mean boundary traces.
\newblock {\em Math. Meth. Appl. Sci.}, 38:3195--3207, 2015.

\bibitem{Nield_09}
D.~A. Nield.
\newblock The {B}eavers--{J}oseph boundary condition and related matters: a
  historical and critical note.
\newblock {\em Transp. Porous Media}, 78:537--540, 2009.

\bibitem{OchoaTapia_Whitaker_95}
A.~J. Ochoa-Tapia and S.~Whitaker.
\newblock Momentum transfer at the boundary between a porous medium and a
  homogeneous fluid. {I}: Theoretical development.
\newblock {\em Int. J. Heat Mass Transfer}, 38:2635--2646, 1995.

\bibitem{Reuter_etal_19}
B.~Reuter, A.~Rupp, V.~Aizinger, and P.~Knabner.
\newblock Discontinuous {G}alerkin method for coupling hydrostatic free surface
  flows to saturated subsurface systems.
\newblock {\em Comput. Math. Appl.}, 77:2291--2309, 2019.

\bibitem{Rybak_etal_15}
I.~Rybak, J.~Magiera, R.~Helmig, and C.~Rohde.
\newblock Multirate time integration for coupled saturated/unsaturated porous
  medium and free flow systems.
\newblock {\em Comput. Geosci.}, 19:299--309, 2015.

\bibitem{Rybak_etal_19}
I.~Rybak, C.~Schwarzmeier, E.~Eggenweiler, and U.~R\"{u}de.
\newblock Validation and calibration of coupled porous-medium and free-flow
  problems using pore-scale resolved models.
\newblock {\em Comput. Geosci.}, 2020.
\newblock doi: 10.1007/s10596-020-09994-x.

\bibitem{Saffman_71}
P.~G. Saffman.
\newblock On the boundary condition at the surface of a porous medium.
\newblock {\em Stud. Appl. Math.}, 50:93--101, 1971.

\bibitem{Sochala_Ern_Piperno_09}
P.~Sochala, A.~Ern, and S.~Piperno.
\newblock Mass conservative {BDF}-discontinuous {G}alerkin/explicit finite
  volume schemes for coupling subsurface and overland flows.
\newblock {\em Comput. Methods Appl. Mech. Engrg.}, 198:2122--2136, 2009.

\bibitem{Tartar}
L.~Tartar.
\newblock {\em An Introduction to Sobolev Spaces and Interpolation Spaces}.
\newblock Lecture Notes of the Unione Matematica Italiana, 2007.

\bibitem{Zampogna_Bottaro_16}
G.~A. Zampogna and A.~Bottaro.
\newblock Fluid flow over and through a regular bundle of rigid fibres.
\newblock {\em J. Fluid Mech.}, 792:5--35, 2016.

\end{thebibliography}
\end{document}